\documentclass{amsart}
\usepackage{graphicx} 
\usepackage[utf8]{inputenc}
\usepackage{amsmath}
\usepackage{amssymb}
\usepackage{booktabs}
\usepackage{amsmath}
\usepackage{mathtools}
\usepackage{pgfplots}
\usepackage{multicol}
\usepackage{enumerate}
\usepackage{bookmark}
\usepackage{algpseudocode}
\usepackage{xcolor}
\usepackage{amsthm}
\usepackage[english]{babel}
\usepackage[T1]{fontenc}
\usepackage{microtype}
\usepackage[utf8]{inputenc}
\usepackage{hyperref}
\usepackage[utf8]{inputenc}
\usepackage{fourier}
\usepackage{microtype}
\usepackage{hyperref}
\usepackage{comment}
\usepackage{graphicx}
\usepackage{amsmath}
\usepackage{amsthm}
\usepackage{amssymb}
\usepackage{pgfplots}
\usepackage{enumitem}
\usepackage{verbatim}
\usepackage{bookmark}
\usepackage{float}
\usepackage{algpseudocode}
\usepackage{xcolor}
\usepackage{mathtools} 
\usepackage{pdfpages}
\usepackage{tikz}
\usepackage{tikz-cd}
\usepackage{setspace}
 \usepackage[capitalise]{cleveref}
\usepackage{todonotes}
\usepackage{fancyhdr}
\title{On the Hereditariness of the Representations of Thread Quivers}
\author{Enrico Maria Del Regno}
\address{Mathematics Research Centre, Tampere University, Finland}
\email{enrico.delregno@tuni.fi}
\makeatletter
\renewcommand{\section}{\@startsection{section}{1}%
  \z@{.7\linespacing\@plus\linespacing}{.5\linespacing}%
  {\centering\normalfont\LARGE\bfseries}}

\renewcommand{\subsection}{\@startsection{subsection}{2}%
  \z@{.5\linespacing\@plus.7\linespacing}{.5\linespacing}%
  {\normalfont\Large\bfseries}}
\makeatother

\newtheorem{thm}{Theorem}[section]

\newtheorem{defn}[thm]{Definition}
\newtheorem{rem}[thm]{Remark}

\newtheorem{lemma}[thm]{Lemma}
\newtheorem{prop}[thm]{Proposition}
\newtheorem{corollary}[thm]{Corollary}

\newcommand{\Ext}{{\rm Ext}}

\newcommand{\rep}{{\rm rep}}

\newcommand {\Hom}{{\rm Hom}}

\def\Vec_F{{\rm Vect_{\mathbb{F}}}}
\def\vec_F{{\rm vect_{\mathbb{F}}}}

\begin{document}
\maketitle
\begin{abstract}
We prove a conjecture of Paquette, Rock, and Yildirim by showing that, for every thread quiver, the abelian category of pointwise finite dimensional representations is hereditary. Since this category typically lacks enough projectives and injectives, standard homological methods do not apply directly. Our approach combines a Yoneda $\Ext$ criterion for hereditariness, established in this paper, with structural reductions to the subcategory of quasi noise free representations. We also indicate an alternative proof using a Keller's theorem on derived categories.
\end{abstract}
\section*{Introduction}

The primary goal of this paper is to resolve a conjecture proposed in October 2024 by Paquette, Rock, and Yildirim \cite[Conjecture 7.5]{thread}, proved in Theorem \ref{hered}. It states that for every thread quiver, the abelian category of pointwise finite dimensional representations is hereditary, meaning that $\Ext^2(-,-)=0$. 

The concept of a thread quiver was originally introduced by Berg and Van Roosmalen in \cite{thread2}, who considered only discrete linear orders as threads. However, we adopt the recent generalization by Paquette, Rock, and Yildirim in \cite{thread}: a thread quiver is a standard quiver where each arrow is equipped with an arbitrary linearly ordered set, called a thread. The associated thread category $C$ is built by inserting the elements of each thread as intermediate objects between the source and the target of the corresponding arrow. The morphisms in $C$ are generated by the original arrows and the order relations along the threads. 

The study of thread quivers is motivated by the persistent homology pipeline in Topological Data Analysis (TDA). While classical TDA typically computes homology over discrete steps, modern applications increasingly require continuous, multiparameter filtrations. Thread quivers are the algebraic framework needed for this shift: by replacing discrete arrows with linearly ordered sets (threads), they model representations that evolve continuously. For a comprehensive background on standard homology pipeline, continuous and multiparameter persistence modules, see the survey paper by Botnan and Lesnick \cite{survey}.

Let $\mathbb{K}$ be a field. A representation of $C$ over $\mathbb{K}$ is a covariant functor from $C$ to the category of $\mathbb{K}$-vector spaces. In the classical setting of a finite quiver $Q$, the category $\mathrm{Rep}(Q)$ of all representations is well known to be hereditary (see, e.g., \cite{assem}). However, as shown by Paquette, Rock, and Yildirim in \cite{thread}, this fundamental homological property is lost for thread categories, where the entire category $\mathrm{Rep}(C)$ fails to be hereditary. 

A similar difference appears with the Krull--Schmidt decomposition. This theorem holds for the entire category $\mathrm{Rep}(Q)$ when $Q$ is a finite quiver. Instead, for a thread category $C$, it only works within the subcategory of pointwise finite-dimensional representations, $\mathrm{rep}^{\mathrm{pwf}}(C)$. Since $\mathrm{rep}^{\mathrm{pwf}}(C)$ recovers this essential structural property, it provides the natural setting to investigate whether hereditariness is also restored.

\textbf{Contributions and Proof Strategy.}
To prove our main result, we must first overcome the problem that $\mathrm{rep}^{\mathrm{pwf}}(C)$ may not have enough projectives or injectives. 

\begin{itemize}
\item \textbf{ General Yoneda $\Ext$ criterion (Theorem~\ref{hered condition}):} We show that for every abelian category $\mathcal{C}$ and every $n\geq 1$, fixing one argument, the Yoneda functor $\Ext^n_{\mathcal{C}}$ is right exact if and only if $\Ext^{n+1}_{\mathcal{C}}(-,-)=0$. For $n=1$, this gives a completely general characterization of the hereditary property (Corollary~\ref{hered corol}) without relying on projective or injective resolutions.
\end{itemize}
We also need to relate $\mathrm{rep}^{\mathrm{pwf}}(C)$ to the subcategory 
$\mathrm{rep}^{\mathrm{qnf}}(C)$ of quasi noise free representations, i.e., those whose Krull--Schmidt decomposition contains only finitely many 
indecomposable noise summands on each fixed thread (where a ``noise'' summand is one whose support does not intersect the original vertices of the quiver). Indeed, by Paquette, Rock, and Yildirim, we know that this subcategory is abelian and the functor 
$\Ext^1_{\mathrm{rep}^{\mathrm{qnf}}(C)}(M,-)$ is right exact for every object $M$ which, by our general $\Ext$ criterion, implies that $\mathrm{rep}^{\mathrm{qnf}}(C)$ is hereditary.
\begin{itemize}
    
\item \textbf{Structural reductions to $\mathrm{rep}^{\mathrm{qnf}}(C)$ (Theorems~\ref{C2} and \ref{comb prop}):} The main technical step of the paper consists of two reduction results, which provide a key mechanism to deduce the hereditariness of $\mathrm{rep}^{\mathrm{pwf}}(C)$ from that of $\mathrm{rep}^{\mathrm{qnf}}(C)$. More precisely, Theorem~\ref{C2} establishes that any short exact sequence in $\mathrm{rep}^{\mathrm{pwf}}(C)$ starting with a quasi noise free term $B$ allows us to extract a new short exact sequence entirely contained within $\mathrm{rep}^{\mathrm{qnf}}(C)$, which is linked to the original one via a commutative diagram. The same theorem will also present a dual result. Using this reduction inductively, Theorem~\ref{comb prop} provides an explicit construction to prove that for any $X,Z \in \mathrm{rep}^{\mathrm{qnf}}(C)$, there is an isomorphism
\[
\Ext^n_{\mathrm{rep}^{\mathrm{pwf}}(C)}(X,Z) \cong \Ext^n_{\mathrm{rep}^{\mathrm{qnf}}(C)}(X,Z).
\]
This equivalence acts as a crucial step for our main result, allowing us to move extension computations from $\mathrm{rep}^{\mathrm{pwf}}(C)$ into the simpler environment of $\mathrm{rep}^{\mathrm{qnf}}(C)$. Moreover, Theorem~\ref{C2} also yields a more abstract alternative proof of this isomorphism via a theorem of Keller on derived categories.

\item \textbf{Resolution of \cite[Conjecture 7.5]{thread} (Theorem~\ref{hered}):} The main result of this paper is the resolution of the open conjecture from \cite{thread} regarding the hereditariness of $\mathrm{rep}^{\mathrm{pwf}}(C)$. In Theorem~\ref{hered}, we establish this by directly verifying that $\Ext^2_{\mathrm{rep}^{\mathrm{pwf}}(C)}(X,Y) = 0$ for all representations $X$ and $Y$. This is achieved by combining our Yoneda criterion with the structural reduction results of Theorems~\ref{C2} and \ref{comb prop}.

\end{itemize}
\textbf{Organization of the paper.}
Section \ref{1} recalls the basic definitions and constructions concerning thread categories and their representations. In Section \ref{2}, we review the Yoneda $\Ext$  and prove the general Yoneda $\Ext$ criterion for arbitrary abelian categories. Finally, in Section \ref{3}, we establish the main structural reduction results and use them to prove the conjecture.

\textbf{Acknowledgments.}
I am deeply grateful to my supervisor, Eero Hyry, for his constant support and crucial advice during the preparation of this work. Furthermore, I would like to thank the Vilho, Yrjö ja Kalle Väisälän foundation for generously funding this research.

\section{Preliminaries: Quivers and thread categories}\label{1}
\begin{defn}[Quiver]
We let $Q = (Q_0, Q_1, s, t)$ denote any quiver (or oriented graph), possibly infinite. The set $Q_0$ is the set of \textbf{vertices}, and $Q_1$ is the set of \textbf{arrows}. The maps $s, t : Q_1 \to Q_0$ are the \textbf{source} and \textbf{target} maps, respectively.
\end{defn}

\begin{defn}[Thread Quiver and Thread]
Let $Q$ be a quiver. For each arrow $\alpha \in Q_1$, we assign a linearly ordered set $P_\alpha$ (called \textbf{thread}), possibly empty, whose elements are regarded as intermediate points inserted between the source and the target of $\alpha$. The family $P\coloneqq\{P_\alpha\}_{\alpha\in Q_1}$
determines a \textbf{thread quiver}, denoted by $(Q,P)$. 
\end{defn}
\begin{defn}[Thread category]
Let $(Q,P)$ be a thread quiver. For each arrow $\alpha\in Q_1$, let $\overline{P_\alpha}$ be the linearly ordered set obtained from $P_\alpha$ by adjoining two formal elements $\min(\alpha)$ and $\max(\alpha)$, with $\min(\alpha)\leq x\leq \max(\alpha)$ for every $x\in P_\alpha$. Consider the surjective map
\[
\varphi:\coprod_{\alpha\in Q_1}\overline{P_\alpha}\to Q_0\bigcup \left(\coprod_{\alpha\in Q_1}P_\alpha\right),
\]
which is the identity on $\coprod_{\alpha\in Q_1}P_\alpha$ and such that $\varphi(\min(\alpha))=s(\alpha)$ and $\varphi(\max(\alpha))=t(\alpha)$. We define the path category $C$ of $(Q,P)$, called simply the \textbf{thread category}, as follows.
\begin{enumerate}
    \item The objects of $C$ are the equivalence classes of $\coprod_{\alpha\in Q_1}\overline{P_\alpha}$ with respect to the relation $x\sim y$ if and only if $\varphi(x)=\varphi(y)$.
    \item For each $x,y\in \overline{P_\alpha}$ with $x\leq y$, we introduce a morphism $\eta_{y,x}:x\to y$. If $\alpha$ is not a loop, we identify $\eta_{\max(\alpha),\min(\alpha)}$ with $\alpha$.
    \item The morphisms of $C$ are paths generated by these morphisms inside the threads and by the arrows of $Q$, and composition is given by concatenation of paths, subject to the relation $\eta_{z,y}\circ\eta_{y,x}=\eta_{z,x}$ whenever $x\leq y\leq z$ in some $\overline{P_\alpha}$.
\end{enumerate}
For more details on this construction and for examples, we refer to \cite{thread}.
\end{defn}
Given a small category $C$, a \textbf{representation} of $C$ over a field $k$ is a covariant functor $M \colon C \to \mathrm{Mod}(k)$. We denote the category of all such representations by {$\mathrm{Rep}(C)$}. In this work, we focus on its full subcategory $\mathrm{rep}^{\mathrm{pwf}}(C)$ consisting of \textbf{pointwise finite dimensional (pwf) representations}, namely those for which the vector space $M(x)$ is finite dimensional for every object $x \in C$. 

When $C$ is a thread category, we can classify a representation $M$ according to its support $\mathrm{supp}(M)$, i.e., the set of objects $x \in C$ such that $M(x) \neq 0$. A representation $M \in \mathrm{rep}^{\mathrm{pwf}}(C)$ is called \textbf{noise} if its support does not intersect the set of vertices $Q_0$, and \textbf{noise free} if it does not contain non-zero noise direct summands. 

A fundamental property of $\mathrm{rep}^{\mathrm{pwf}}(C)$ is that it is a Krull--Schmidt category, which allows us to reduce many homological arguments to the study of indecomposable summands.

\begin{thm}\cite[Theorem 1.15]{thread} \label{KS}
Every representation $M \in \mathrm{rep}^{\mathrm{pwf}}(C)$ admits a unique decomposition (up to isomorphism and permutation) into a direct sum of indecomposable representations. Consequently, $M$ decomposes uniquely as
\[
M \cong M_{NF} \oplus M_N,
\]
where $M_{NF}$ is noise free and $M_N$ is a noise representation. Furthermore, $M_N$ admits a unique decomposition along the threads:
\[
M_N = \bigoplus_{\alpha \in Q_1} M_{N,\alpha}, \quad \text{with} \quad M_{N,\alpha} = \bigoplus_{j \in I_\alpha} N_j,
\]
where $\mathrm{supp}(M_{N,\alpha}) \subseteq P_\alpha$ for each $\alpha \in Q_1$, and each $N_j$ is an indecomposable noise summand.
\end{thm}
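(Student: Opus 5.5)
The plan is to reduce the statement to the restriction of $M$ to a single thread, apply the classical structure theory of pointwise finite dimensional persistence modules over a totally ordered set, and then glue the pieces back together.

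\textbf{Step 1: existence of an indecomposable decomposition.} I would show that every $M \in \mathrm{rep}^{\mathrm{pwf}}(C)$ is a direct sum of indecomposables, each with local endomorphism ring. The approach is to prove that $\mathrm{rep}^{\mathrm{pwf}}(C)$, viewed inside $\mathrm{Rep}(C)$, satisfies the hypotheses of the Botnan--Crawley-Boevey theorem, or Gabriel's criterion. Pointwise finite dimensionality makes each $M(x)$ artinian. Concretely, I would follow Crawley-Boevey's argument for pwf modules over $\mathbb{R}$: take a maximal family of independent indecomposable summands using Zorn's lemma, and check that the sum of this family exhausts $M$. The exhaustion is verified pointwise, using finite dimensionality at each object. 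Finite dimensionality also makes $\mathrm{End}$ of an indecomposable local, via a Fitting-type argument applied to the inverse limit of finite dimensional algebras.

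\textbf{Step 2: uniqueness.} Once every indecomposable has a local endomorphism ring, uniqueness up to isomorphism and permutation follows from the Krull--Schmidt--Remak--Azumaya theorem.

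\textbf{Step 3: splitting off the noise part.} I would group the indecomposable summands by support.
\begin{itemize}
\item Let $M_{NF}$ be the sum of the summands whose support meets $Q_0$.
\item Let $M_N$ be the sum of the remaining summands.
\end{itemize}
By construction $M_{NF}$ has no nonzero noise summand. The uniqueness in Step 2 implies that this splitting is well defined up to isomorphism.

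\textbf{Step 4: decomposition along the threads.} The key observation concerns the support of an indecomposable noise summand $N$. Its support avoids $Q_0$, and the only morphisms in $C$ joining different threads pass through vertices. Any morphism between objects of different threads passing through a vertex $v$ factors through $M(v)=0$. Hence the support of $N$ splits $N$ into a direct sum over threads, and indecomposability forces $\mathrm{supp}(N) \subseteq P_\alpha$ for a single arrow $\alpha$. Grouping the noise summands by their thread $\alpha$ then gives the stated decomposition
\[
M_N = \bigoplus_{\alpha \in Q_1} M_{N,\alpha}, \qquad M_{N,\alpha} = \bigoplus_{j \in I_\alpha} N_j .
\]

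\textbf{Main obstacle.} The hard part is Step 1, namely existence for an arbitrary, possibly infinite, quiver with uncountable threads. I would handle it with an inverse-limit construction: choose, for each finite subquiver and each finite set of thread points, a compatible system of idempotents. Compactness of the system is supplied by the finite dimensionality at each object.
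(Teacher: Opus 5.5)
The paper gives no proof of this statement; it quotes it from Paquette--Rock--Yildirim. Your Steps 1--4 are the standard argument and are sound: Botnan--Crawley-Boevey for existence with local endomorphism rings, Krull--Schmidt--Azumaya for uniqueness, then sorting the summands by support. A few points need fixing.

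\textbf{The opening plan and Step 1.} Your opening plan (restrict to one thread, use the interval decomposition over a total order, glue) is not what your steps do, and it could not work. Noise free indecomposables are not thread-local: if all threads are empty, the statement is just Krull--Schmidt for pwf representations of an arbitrary quiver. The thread-by-thread picture only matters for noise summands, and there no interval theory is needed.

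In Step 1 there is nothing to verify: the Botnan--Crawley-Boevey theorem covers pwf representations of any small category, and $C$ is small. Your ``concrete'' alternative is not a proof as written. Crawley-Boevey's argument over $\mathbb{R}$ relies on the total order. Your Zorn argument needs two facts: that along a chain the sum of the chosen summands is still a direct summand, and that a nonzero pwf complement has an indecomposable summand. These are precisely the compactness content of that theorem, not a pointwise dimension count. Just cite it.

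\textbf{Step 3.} The claim that $M_{NF}$ has no nonzero noise summand is not ``by construction''; it needs Step 2. Suppose $M_{NF}=U\oplus V$ with $U\neq 0$ noise. Decomposing $U$ and $V$ gives a second indecomposable decomposition of $M_{NF}$ containing a summand with support disjoint from $Q_0$. This contradicts Azumaya's theorem, since isomorphic representations have equal supports and every summand of your chosen decomposition meets $Q_0$. The same argument gives uniqueness of the pair $(M_{NF},M_N)$ up to isomorphism.

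\textbf{Step 4.} You mean $N(v)=0$, not $M(v)=0$. It is cleaner to apply the argument directly to $M_N$, which vanishes at every vertex. Let $M_N|_{P_\alpha}$ be the subrepresentation equal to $M_N$ on $P_\alpha$ and zero elsewhere. Then $M_N=\bigoplus_{\alpha}M_N|_{P_\alpha}$ holds as an internal direct sum of subrepresentations. This makes the thread decomposition canonical before you split each $M_{N,\alpha}$ into the $N_j$.
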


A representation $M \in \mathrm{rep}^{\mathrm{pwf}}(C)$ is called \textbf{quasi noise free (qnf)} if, in its unique Krull--Schmidt decomposition, only finitely many indecomposable noise summands occur on any single thread $P_\alpha$. We denote by $\mathrm{rep}^{\mathrm{qnf}}(C)$ the full subcategory of $\mathrm{rep}^{\mathrm{pwf}}(C)$ consisting of all quasi noise free representations. 

As shown in \cite[Proposition 7.1]{thread}, $\mathrm{rep}^{\mathrm{qnf}}(C)$ is a Serre subcategory of $\mathrm{rep}^{\mathrm{pwf}}(C)$; that is, the class of quasi noise free representations is closed under taking subobjects, quotients, and extensions. Being a Serre subcategory of an abelian category, $\mathrm{rep}^{\mathrm{qnf}}(C)$ is itself abelian.

The tool used to establish this homological property is the notion of a \textbf{valid partition}. A partition $\mathfrak{P}$ of the objects of the thread category $C$ is called valid if each vertex $x \in Q_0$ forms a singleton cell and, for every arrow $\alpha \in Q_1$, the thread $P_\alpha$ is partitioned into finitely many interval cells. This concept, introduced in \cite[Section 2.2]{thread}, will also be crucial for the proof of Theorem~\ref{C2}.

Given a representation $M \in \mathrm{rep}^{\mathrm{pwf}}(C)$, one can associate a \textbf{canonical partition} $\mathfrak{P}_M$, characterized by the property that $x \leq y$ lie in the same cell if and only if they are in the same thread and $M(\eta_{yx})$ is an isomorphism. By \cite[Proposition 2.7]{thread}, the canonical partition $\mathfrak{P}_M$ is valid if and only if $M$ is quasi noise free. In particular, this equivalence guarantees that every quasi noise free representation admits a finite partition into intervals along each thread on which all structural maps are isomorphisms, providing the structural control needed for our main results.

We conclude the preliminaries by recalling the following key result from \cite{thread}.

\begin{thm} \cite[Theorem~7.2]{thread} \label{thm:qnf_hered}
Let $C$ be a thread category. In the category $\mathrm{rep}^{\mathrm{qnf}}(C)$, the functor $\Ext^1_{\mathrm{rep}^{\mathrm{qnf}}(C)}(M,-)$ is right exact for every object $M$.
\end{thm}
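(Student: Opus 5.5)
The plan is to reduce the statement to ordinary quiver representations by means of valid partitions, and then use the standard description of $\Ext^1$ for quivers. We must show the following: if $M\in\mathrm{rep}^{\mathrm{qnf}}(C)$ and $g\colon Y\to Z$ is an epimorphism in $\mathrm{rep}^{\mathrm{qnf}}(C)$, then the induced map $\Ext^1(M,Y)\to\Ext^1(M,Z)$ is surjective. Here $\Ext^1$ denotes Yoneda $\Ext^1$ in $\mathrm{rep}^{\mathrm{qnf}}(C)$.

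\textbf{Step 1: a common valid partition.} By \cite[Proposition 2.7]{thread}, the canonical partitions $\mathfrak{P}_M$, $\mathfrak{P}_Y$ and $\mathfrak{P}_Z$ are valid. Their common refinement $\mathfrak{P}$ is again valid, since on each thread it is a finite intersection of finite interval partitions. Let $\mathcal{R}_{\mathfrak{P}}$ be the full subcategory of representations $N$ such that $N(\eta_{y,x})$ is an isomorphism whenever $x\leq y$ lie in the same cell of $\mathfrak{P}$.

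\textbf{Step 2: $\mathcal{R}_{\mathfrak{P}}$ is closed under extensions.} Take a short exact sequence $0\to A\to E\to B\to 0$ with $A,B\in\mathcal{R}_{\mathfrak{P}}$. Evaluate it at $x\leq y$ in one cell. The five lemma then shows that $E(\eta_{y,x})$ is an isomorphism. So Yoneda $\Ext^1$ between objects of $\mathcal{R}_{\mathfrak{P}}$ may be computed inside $\mathcal{R}_{\mathfrak{P}}$.

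\textbf{Step 3: identify $\mathcal{R}_{\mathfrak{P}}$ with a quiver category.} Let $Q_{\mathfrak{P}}$ be the ordinary quiver whose vertices are the cells of $\mathfrak{P}$. Its arrows join consecutive cells along each thread, including the cells that are the singleton vertices of $Q_0$, which sit at the ends of each thread. Then $\mathcal{R}_{\mathfrak{P}}$ is equivalent to $\mathrm{rep}^{\mathrm{pwf}}(Q_{\mathfrak{P}})$. In one direction, identify each cell with a single space via the isomorphisms $N(\eta)$. In the other, extend a quiver representation constantly along each cell. Checking that this is an exact equivalence, so that it preserves Yoneda $\Ext^1$, is routine. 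The equivalence sends $M,Y,Z$ and $g$ to pwf representations of $Q_{\mathfrak{P}}$ and an epimorphism between them.

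\textbf{Step 4: Ringel's sequence and right exactness.} For an arbitrary quiver $Q'$, possibly infinite, and arbitrary representations $M,N$, there is the exact sequence
\[
0\to\Hom(M,N)\to\prod_{v\in Q'_0}\Hom_{\mathbb K}(M_v,N_v)\xrightarrow{\ \delta\ }\prod_{a\in Q'_1}\Hom_{\mathbb K}(M_{s(a)},N_{t(a)})\to\Ext^1(M,N)\to 0 .
\]
The surjection onto $\Ext^1$ sends a family $(f_a)$ to the extension with pointwise $M_v\oplus N_v$ and arrow maps $\begin{pmatrix}M_a&0\\ f_a&N_a\end{pmatrix}$. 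Every extension is of this form, because short exact sequences of vector spaces split pointwise. The fourth term is therefore the cokernel of $\delta$, and this description is natural in $N$. Now $N\mapsto\prod_a\Hom_{\mathbb K}(M_{s(a)},N_{t(a)})$ sends epimorphisms to epimorphisms, since epimorphisms are pointwise surjective and $\Hom_{\mathbb K}(V,-)$ is exact. So for an epimorphism $Y\to Z$ every class in $\Ext^1(M,Z)$ lifts through $\prod_a\Hom_{\mathbb K}(M_{s(a)},Z_{t(a)})$ to $\Ext^1(M,Y)$. This proves surjectivity, and hence the theorem.

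The main obstacle I expect is Step 3: setting up the equivalence $\mathcal{R}_{\mathfrak{P}}\simeq\mathrm{rep}^{\mathrm{pwf}}(Q_{\mathfrak{P}})$ carefully. This needs care with cells that are open or half-open intervals, with endpoints glued to vertices, and with loops. One must verify that every morphism of $C$ factors through the consecutive-cell arrows, so that functors constant on cells are exactly the quiver representations. Once this is in place, Steps 2 and 4 are standard.
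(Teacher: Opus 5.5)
Your proof is correct. The paper itself gives no argument for this statement. It is quoted from \cite[Theorem~7.2]{thread} and used as a black box, only to feed Corollary~\ref{hered corol} and obtain Corollary~\ref{qnf hered}.

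Your route is self-contained, relying only on \cite[Proposition~2.7]{thread} and elementary quiver theory:
\begin{itemize}
\item the common refinement of $\mathfrak P_M,\mathfrak P_Y,\mathfrak P_Z$ is valid;
\item $\mathcal R_{\mathfrak P}$ is closed under extensions, so it computes $\Ext^1_{\mathrm{rep}^{\mathrm{qnf}}(C)}(M,-)$ on $Y$ and $Z$;
\item the pushout along $g$ of an extension stays in $\mathcal R_{\mathfrak P}$, since it is again an extension of two objects of $\mathcal R_{\mathfrak P}$;
\item in $\mathrm{rep}^{\mathrm{pwf}}(Q_{\mathfrak P})$ the block-matrix description exhibits $\Ext^1(M,-)$ as a natural quotient of the epimorphism-preserving functor $N\mapsto\prod_a\Hom_{\mathbb K}(M_{s(a)},N_{t(a)})$.
\end{itemize}

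Step 3, which you flag as the main obstacle, is indeed routine. The only relations in $C$ are $\eta_{z,y}\eta_{y,x}=\eta_{z,x}$ inside a thread, so the constant extension over cells is a functor. Sample points give a quasi-inverse, loops merely produce oriented cycles in $Q_{\mathfrak P}$, and any equivalence of abelian categories is exact.

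You leave one point implicit: exactness in the middle of $\Ext^1(M,X)\to\Ext^1(M,Y)\to\Ext^1(M,Z)$. This is automatic from the Yoneda long exact sequence, and the paper reads ``right exact'' the same way in the proof of Theorem~\ref{hered condition}.

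Your argument also makes clear why the paper needs Theorems~\ref{C2} and~\ref{comb prop} for its main result. For a pwf representation that is not qnf, the canonical partition is not valid. Its cells can accumulate or even be dense, so the representation is not determined by maps between consecutive cells, and the reduction to an ordinary quiver breaks down. The paper gets around this by pushing pwf extensions with qnf ends into $\mathrm{rep}^{\mathrm{qnf}}(C)$, where your argument applies.
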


In \cite{thread}, this property is taken as the definition of a hereditary category. While a priori this might seem like a weaker condition, we will formally establish in Corollary~\ref{qnf hered} that it is indeed equivalent to hereditariness in the usual homological sense (i.e., $\Ext^2_{\mathcal{C}}(-,-)=0$).
\section{A General Criterion for Hereditary Categories} \label{2}

In this section, we generalize an important criterion for $\Ext$ using the Yoneda definition of the $\Ext$ groups. We use this approach because the category $\mathrm{rep}^{\mathrm{pwf}}(C)$ usually lacks enough projectives or injectives, meaning we cannot use classical derived functor techniques.

We briefly recall the structure of the Yoneda $\Ext$ and refer to \cite[Chapter VII]{book} for a deeper explanation. For any abelian category $\mathcal{C}$ and objects $A, C \in \mathcal{C}$, the group $\Ext^n(C,A)$ is the set of equivalence classes of $n$-extensions with end terms $A$ and $C$. These are the exact sequences of length $n$ of the form:
\[ E: 0 \longrightarrow A \longrightarrow B_1 \longrightarrow \dots \longrightarrow B_n \longrightarrow C \longrightarrow 0. \]
Two $n$-extensions $E$ and $F$ are equivalent if there is an extension $G$ with the same end terms such that they are connected by morphisms $E \longrightarrow G \longleftarrow F$ or $E \longleftarrow G \longrightarrow F$ that are the identity on the end objects $A$ and $C$.

Moreover, we explicitly recall the Yoneda product (or splicing). Given an $n$-extension $E \in \Ext^n(C,A)$ and an $m$-extension $F \in \Ext^m(D,C)$ of the form:
\[ E: \quad 0 \longrightarrow A \longrightarrow B_1 \longrightarrow \dots \longrightarrow B_n \xrightarrow{\lambda} C \longrightarrow 0 \]
\[ F: \quad 0 \longrightarrow C \xrightarrow{\mu} B'_1 \longrightarrow \dots \longrightarrow B'_m \longrightarrow D \longrightarrow 0 \]
their Yoneda product is the $(n+m)$-extension $E \circ F \in \Ext^{n+m}(D,A)$ obtained by concatenating the sequences at $C$ via the composition $\mu\lambda$:
\[ E \circ F: \quad 0 \longrightarrow A \longrightarrow B_1 \longrightarrow \dots \longrightarrow B_n \xrightarrow{\mu\lambda} B'_1 \longrightarrow \dots \longrightarrow B'_m \longrightarrow D \longrightarrow 0. \]

With these tools and the Yoneda $\Ext$ long exact sequence (detailed in \cite[VII.2.2 and VII.5.1]{book}), we can generalize the following homological property to any abelian category.
\begin{thm}\label{hered condition}
Let $\mathcal{C}$ be an abelian category and let $n \geq 1$.
Then the following conditions are equivalent:
\begin{enumerate}
    \item the functor $\Ext_{\mathcal{C}}^n(M,-)$ is right exact for each object $M$ in $\mathcal{C}$;
    \item $\Ext_{\mathcal{C}}^{n+1}(M,A)=0$ for each pair of objects $(A,M) \in \mathcal{C}$.
\end{enumerate}
\end{thm}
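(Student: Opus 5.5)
The plan is to use the Yoneda long exact sequence in the second variable. For a short exact sequence $0\to A\to B\to C\to 0$ in $\mathcal{C}$ and any object $M$, \cite[VII.5.1]{book} gives an exact sequence
\[
\cdots\to\Ext^n(M,A)\to\Ext^n(M,B)\to\Ext^n(M,C)\xrightarrow{\ \delta\ }\Ext^{n+1}(M,A)\to\Ext^{n+1}(M,B)\to\cdots,
\]
where the connecting map $\delta$ is the Yoneda product with the class of the short exact sequence. We never need resolutions, only this sequence and splicing.

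\textbf{(2)$\Rightarrow$(1).} This direction is immediate. If $\Ext^{n+1}(M,A)=0$, the long exact sequence shows that $\Ext^n(M,B)\to\Ext^n(M,C)$ is surjective. Hence $\Ext^n(M,-)$ carries epimorphisms to epimorphisms. Combined with the exactness of $\Ext^n(M,A)\to\Ext^n(M,B)\to\Ext^n(M,C)$, this is exactly right exactness.

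\textbf{(1)$\Rightarrow$(2).} Fix a class $\xi\in\Ext^{n+1}(M,A)$, represented by
\[
0\to A\to B_1\xrightarrow{f} B_2\to\cdots\to B_{n+1}\to M\to 0 .
\]
Let $K=\operatorname{im}(f)$. This splits $\xi$ as a Yoneda product $\xi=\varepsilon\circ\zeta$, where
\[
\varepsilon:\ 0\to A\to B_1\to K\to 0
\]
is a short exact sequence and
\[
\zeta:\ 0\to K\to B_2\to\cdots\to B_{n+1}\to M\to 0
\]
is an $n$-extension, so $\zeta\in\Ext^n(M,K)$.

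Apply the long exact sequence to $\varepsilon$. The connecting map $\delta:\Ext^n(M,K)\to\Ext^{n+1}(M,A)$ is the product with $\varepsilon$, so $\delta(\zeta)=\varepsilon\circ\zeta=\xi$. By (1), the map $\Ext^n(M,B_1)\to\Ext^n(M,K)$ is surjective. By exactness, $\delta$ is then zero, and therefore $\xi=0$.

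\textbf{Main obstacle.} The delicate point is justifying that $\delta$ really is the Yoneda product with $\varepsilon$, including the sign convention. We also need exactness of the long sequence at $\Ext^n(M,K)$ in an arbitrary abelian category, where Yoneda $\Ext$ groups may be large classes rather than sets. I would cite \cite[VII.5.1]{book} for both facts; a sign discrepancy does not affect vanishing. As a fallback, one can argue directly. Surjectivity lets us write $\zeta=\Ext^n(M,p)(\eta)=p_*\eta$ for some $\eta\in\Ext^n(M,B_1)$, where $p:B_1\to K$. Then $\xi=\varepsilon\circ p_*\eta=(\varepsilon p)\circ\eta$. The pullback $\varepsilon p$ of $\varepsilon$ along $p$ splits, because $p$ factors through $B_1$ itself via the identity. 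So $\xi$ is a product with a split extension and hence vanishes.
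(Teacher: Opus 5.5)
Your proposal is correct and takes the same route as the paper. For (1)$\Rightarrow$(2) you split the $(n+1)$-extension at $K=\operatorname{Im}(B_1\to B_2)$ as $\varepsilon\circ\zeta$ and note that surjectivity of $\Ext^n(M,B_1)\to\Ext^n(M,K)$ forces the connecting map $\delta=\varepsilon\circ-$ to vanish; for (2)$\Rightarrow$(1) you read surjectivity off the long exact sequence. Your fallback argument (write $\zeta=p_*\eta$ and observe that $\varepsilon p$ splits) also works, and it directly justifies the step the paper only cites, namely that $\delta$ is the Yoneda product with $\varepsilon$.
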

\begin{proof}
    \textbf{$1\implies 2$} Let us consider an arbitrary exact sequence $\epsilon\in \Ext^{n+1}(M,A)$.
    \[\epsilon\coloneqq\;\; 0\rightarrow A\xrightarrow f E_1\xrightarrow g E_2\xrightarrow h E_3 \rightarrow\dots  \rightarrow E_{n+1}\xrightarrow i M\rightarrow 0\]
   Now, let us define $K\coloneqq{\rm Im}(g)={\rm Ker}(h)$ and the exact sequences \[\epsilon_1\coloneqq\;\; 0\rightarrow A\xrightarrow fE_1\xrightarrow s K\rightarrow 0 \;\;\;\;\text{and} \;\;\;\; \epsilon_2\coloneqq0\rightarrow K \xrightarrow t E_2\xrightarrow h E_3 \rightarrow \dots  \rightarrow E_{n+1}\xrightarrow i M\rightarrow 0\] where $s$ is the restriction of $g$ considering just its image as codomain and $t$ is the standard inclusion. It follows that $\epsilon=\epsilon_1\circ \epsilon_2$.

Let us consider the long exact sequence for $\Ext(M,-)$ associated to the short exact sequence $\epsilon_1$:
\[ \dots \longrightarrow \Ext^n(M, E_1) \xrightarrow{\phi_s} \Ext^n(M, K) \xrightarrow{\delta_{\epsilon_1}} \Ext^{n+1}(M, A) \longrightarrow \dots \;. \]
By our assumption, the functor $\Ext^n(M,-)$ is right exact, which means the map $\phi_s$ is surjective. Therefore, the image of $\phi_s$ is the entire group $\Ext^n(M, K)$. By the exactness of the sequence, the kernel of $\delta_{\epsilon_1}$ must also be the entire group $\Ext^n(M, K)$. This strictly forces the map $\delta_{\epsilon_1}$ to be zero.

As detailed in \cite[VII.2.2 and VII.5.1]{book}, the connecting homomorphism $\delta_{\epsilon_1}$ in this sequence is given by the Yoneda product (splicing) with $\epsilon_1$. This means that for any element $y \in \Ext^n(M, K)$, we have $\delta_{\epsilon_1}(y) = \epsilon_1 \circ y = 0$. Applying this to our $n$-extension $\epsilon_2 \in \Ext^n(M, K)$, we get:
\[ \epsilon = \epsilon_1 \circ \epsilon_2 = \delta_{\epsilon_1}(\epsilon_2) = 0. \]
By the arbitrariness of $\epsilon$, the claim follows.

\textbf{$2\implies 1$} This implication is immediate by the $\Ext$ long exact sequence. Let us consider the short exact sequence $0 \to A \to B \to C \to 0$ and related long exact sequence associated to $\text{Ext}(M, -)$:
$$\dots \to \text{Ext}^n(M, B) \xrightarrow{\beta} \text{Ext}^n(M, C) \xrightarrow{\delta} \text{Ext}^{n+1}(M, A)\rightarrow\dots \;.$$ 
By assumption, $\Ext^{n+1}(M,A)=0$, hence $\delta=0$. Therefore,
$\operatorname{Ker}(\delta)=\Ext^n(M,C)$.
By exactness,\[\operatorname{Im}(\beta)=\operatorname{Ker}(\delta)=\Ext^n(M,C),\]
so $\beta$ is surjective. Hence $\Ext^n(M,-)$ is right exact.
\end{proof}
\begin{rem} \label{hered rem}
Dually, using the long exact sequence in the first variable, one gets the analogous statement for the contravariant functors $\Ext^n(-,M)$. Namely,
    \[ \Ext_{\mathcal{C}}^n(-,M)\text{ is right exact for each object } M \in \mathcal{C}\iff\]\[ \Ext_{\mathcal{C}}^{n+1}(A,M)=0 \text{ for each pair of objects } (A,M) \in \mathcal{C}.\]
\end{rem}
\begin{corollary} \label{hered corol}
    Given an abelian category $\mathcal{C}$, the following conditions are equivalent:
\begin{itemize}
    \item $\Ext_\mathcal{C}^1(-,M)$ is right exact for any object $M$.
    \item $\Ext_\mathcal{C}^1(M,-)$ is right exact for any object $M$.
    \item $\mathcal{C}$ is hereditary, i.e., $\Ext_\mathcal{C}^2(-,-)=0$.
\end{itemize}
\end{corollary}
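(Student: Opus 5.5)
This corollary is the special case $n=1$ of Theorem~\ref{hered condition} together with its dual, Remark~\ref{hered rem}. The plan is to prove that each of the first two bullet points is equivalent to the third; the equivalence of the first two then follows by transitivity.

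First, apply Theorem~\ref{hered condition} with $n=1$. Condition (1) becomes the statement that $\Ext^1_{\mathcal{C}}(M,-)$ is right exact for every $M$. Condition (2) becomes the statement that $\Ext^2_{\mathcal{C}}(M,A)=0$ for all pairs of objects, which is precisely $\Ext^2_{\mathcal{C}}(-,-)=0$. So the theorem gives the equivalence of the second and third bullets directly.

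Second, apply Remark~\ref{hered rem} with $n=1$. It says that $\Ext^1_{\mathcal{C}}(-,M)$ is right exact for every $M$ if and only if $\Ext^2_{\mathcal{C}}(A,M)=0$ for all $A,M$. The right-hand side is again the vanishing of $\Ext^2_{\mathcal{C}}(-,-)$, so the first bullet is also equivalent to the third.

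The only step needing any care is the remark, since the paper states it without proof. Its proof dualizes Theorem~\ref{hered condition} using the long exact sequence in the first variable. Given a $2$-extension $0\to M\to E_1\to E_2\to A\to 0$, one cuts it at $K=\operatorname{Im}(E_1\to E_2)$, giving a short exact sequence $\epsilon_2\colon 0\to K\to E_2\to A\to 0$ together with a $1$-extension $\epsilon_1$ of $K$ by $M$. The contravariant long exact sequence for $\Ext(-,M)$ attached to $\epsilon_2$ contains
\[\Ext^1(E_2,M)\to\Ext^1(K,M)\xrightarrow{\delta}\Ext^2(A,M).\]
Right exactness of $\Ext^1(-,M)$ makes the first map surjective, so $\delta=0$. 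Since $\delta$ is Yoneda splicing with $\epsilon_2$, the original class $\epsilon_1\circ\epsilon_2=\delta(\epsilon_1)$ vanishes. The converse direction reads off exactness of the same sequence, exactly as in the proof of $2\implies 1$. I expect no real obstacle beyond checking that the connecting map in the first variable is indeed given by splicing, which is standard (\cite[VII.5]{book}).
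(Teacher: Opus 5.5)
Your proposal is correct and follows the paper's proof, which simply invokes Theorem~\ref{hered condition} and Remark~\ref{hered rem} with $n=1$. Your sketch of the dual argument for the remark is a sound addition, since the paper states the remark without proof.
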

\begin{proof}
   The claim follows immediately by applying Theorem \ref{hered condition} and Remark \ref{hered rem} with $n=1$.
\end{proof}
As an immediate application of this characterization, we obtain the hereditary property for the category of quasi noise free representations mentioned in the preliminaries.

\begin{corollary} \label{qnf hered}
Let $C$ be a thread category. The category $\mathrm{rep}^{\mathrm{qnf}}(C)$ of quasi noise free representations is hereditary, meaning that $\Ext^2_{\mathrm{rep}^{\mathrm{qnf}}(C)}(-,-) = 0$.
\end{corollary}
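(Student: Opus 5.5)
The corollary follows by combining Theorem~\ref{thm:qnf_hered} with the general criterion of Section~\ref{2}. First, I will confirm that $\mathrm{rep}^{\mathrm{qnf}}(C)$ is an abelian category, so that the Yoneda $\Ext$ groups and their long exact sequences are available inside it. This holds because $\mathrm{rep}^{\mathrm{qnf}}(C)$ is a Serre subcategory of the abelian category $\mathrm{rep}^{\mathrm{pwf}}(C)$ by \cite[Proposition 7.1]{thread}. In particular, kernels and cokernels computed in $\mathrm{rep}^{\mathrm{pwf}}(C)$ of maps between quasi noise free objects stay quasi noise free, so a sequence in $\mathrm{rep}^{\mathrm{qnf}}(C)$ is exact there exactly when it is exact in $\mathrm{rep}^{\mathrm{pwf}}(C)$. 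All $\Ext$ groups below are Yoneda $\Ext$ groups computed inside $\mathcal{C}\coloneqq \mathrm{rep}^{\mathrm{qnf}}(C)$, using only extensions with all terms in $\mathcal{C}$.

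Next, Theorem~\ref{thm:qnf_hered} says that $\Ext^1_{\mathcal{C}}(M,-)$ is right exact for every object $M\in\mathcal{C}$. This is precisely condition (1) of Theorem~\ref{hered condition} with $n=1$, or equivalently the second item of Corollary~\ref{hered corol}. Applying the implication $1\implies 2$ of Theorem~\ref{hered condition} with $n=1$ gives $\Ext^2_{\mathcal{C}}(M,A)=0$ for all $M,A\in\mathcal{C}$, which is the claim.

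There is essentially no obstacle; the only point needing care is consistency of notions. The right exactness in Theorem~\ref{thm:qnf_hered} is stated in \cite{thread} for the $\Ext^1$ of $\mathrm{rep}^{\mathrm{qnf}}(C)$ itself. I must make sure this is the same $\Ext^1$ used in Theorem~\ref{hered condition}, namely the Yoneda $\Ext^1$ in the abelian category $\mathcal{C}$, with the long exact sequence from \cite[VII]{book}. For $n=1$ this is standard, since $\Ext^1$ classifies short exact sequences in $\mathcal{C}$ under the usual equivalence. Once this identification is made, the corollary is a direct instance of the general criterion.
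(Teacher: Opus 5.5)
Your proposal is correct and follows the paper's proof exactly. The paper also feeds the right exactness of $\Ext^1_{\mathrm{rep}^{\mathrm{qnf}}(C)}(M,-)$ from Theorem~\ref{thm:qnf_hered} into the $n=1$ case of Theorem~\ref{hered condition} (via Corollary~\ref{hered corol}), relying on $\mathrm{rep}^{\mathrm{qnf}}(C)$ being abelian as a Serre subcategory of $\mathrm{rep}^{\mathrm{pwf}}(C)$.
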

\begin{proof}
The statement follows immediately by combining Theorem \ref{thm:qnf_hered} and Corollary \ref{hered corol}.
\end{proof}
We conclude this section by showing a powerful consequence of the biadditivity of the Yoneda product: the global vanishing of $\Ext$ at any given degree strictly forces the vanishing of all higher extension groups.

\begin{prop} \label{prop:ext_vanish_general}
Let $\mathcal{C}$ be an abelian category and let $n \geq 1$ be an integer. If  $\;\Ext^n_{\mathcal{C}}(A, B) = 0$ for all objects $A, B \in \mathcal{C}$, then for any objects $X, Y \in \mathcal{C}$ and any integer $m > n$, we have
\[ \Ext^m_{\mathcal{C}}(X, Y) = 0. \]
\end{prop}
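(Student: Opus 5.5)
It suffices to show that $\Ext^{m}=0$ for $m=n+1$; the general case then follows by induction on $m$, since vanishing of $\Ext^{n+1}$ for all pairs of objects is again a hypothesis of the same form, with $n$ replaced by $n+1$. So fix $X,Y\in\mathcal{C}$ and an arbitrary $(n+1)$-extension
\[ \epsilon:\; 0\to Y\to E_1\xrightarrow{g} E_2\to\dots\to E_{n+1}\to X\to 0 .\]
Let $K\coloneqq \mathrm{Im}(g)=\mathrm{Ker}(E_2\to E_3)$. Exactly as in the proof of Theorem \ref{hered condition}, we factor $\epsilon=\epsilon_1\circ\epsilon_2$. Here $\epsilon_1: 0\to Y\to E_1\to K\to 0$ is a $1$-extension, so $\epsilon_1\in\Ext^1(K,Y)$. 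The other factor, $\epsilon_2: 0\to K\to E_2\to\dots\to E_{n+1}\to X\to 0$, is an $n$-extension, so $\epsilon_2\in\Ext^n(X,K)$.

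By hypothesis $\Ext^n(X,K)=0$, hence $\epsilon_2$ is equivalent to the zero (split) $n$-extension. The key input is that the Yoneda product is well defined on equivalence classes and biadditive:
\[\Ext^1(K,Y)\times\Ext^n(X,K)\to\Ext^{n+1}(X,Y),\]
as established in \cite[Chapter VII]{book}. Biadditivity gives $\epsilon_1\circ 0=0$, because $\epsilon_1\circ 0=\epsilon_1\circ(0+0)=\epsilon_1\circ 0+\epsilon_1\circ 0$. Therefore $\epsilon=\epsilon_1\circ\epsilon_2=\epsilon_1\circ 0=0$ in $\Ext^{n+1}(X,Y)$. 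Since $\epsilon$ was arbitrary, $\Ext^{n+1}(X,Y)=0$.

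The only point needing care is the reduction to $m=n+1$. Alternatively, one can argue directly for arbitrary $m>n$ by splitting the $m$-extension at the kernel after $m-n$ steps. This writes it as the product of an $(m-n)$-extension and an $n$-extension, and the $n$-extension factor vanishes by hypothesis; this avoids induction altogether. I expect no real obstacle beyond recalling that Yoneda composition respects the equivalence relation and is additive in each variable. This is exactly the biadditivity the statement alludes to, and it is standard from \cite[VII]{book}. For $n=1$, this gives in particular that a hereditary abelian category has $\Ext^m=0$ for all $m\ge 2$.
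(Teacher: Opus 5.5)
Your proof is correct and follows essentially the paper's argument: write the extension as a Yoneda splice with one factor in $\Ext^n$, which vanishes by hypothesis, then use biadditivity of the Yoneda product to conclude that the splice is zero. The differences are cosmetic. The paper works directly with arbitrary $m$ and kills the left factor in $\Ext^n(K,Y)$, whereas you reduce to $m=n+1$ by induction and kill the right factor in $\Ext^n(X,K)$; your closing alternative recovers the direct, non-inductive version.
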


\begin{proof}
Let $m > n$ and take any $m$-extension class $[E] \in \Ext^m_{\mathcal{C}}(X, Y)$. 

We can factor $E$ at the $n$-th intermediate image, obtaining an intermediate object $K \in \mathcal{C}$ and expressing the class as a Yoneda splice:
\[ [E] = [E_n] \circ [E_{m-n}], \]
where $[E_n] \in \Ext^n_{\mathcal{C}}(K, Y)$ and $[E_{m-n}] \in \Ext^{m-n}_{\mathcal{C}}(X, K)$.

By our assumption, the group $\Ext^n_{\mathcal{C}}(K, Y)$ is trivial, so we have $[E_n] = 0$. Since the Yoneda product is biadditive with respect to the Baer sum (see \cite[Chapter VII, Lemma 3.2]{book}), we can compute the product explicitly:
\[ [E] = 0 \circ [E_{m-n}] = (0 + 0) \circ [E_{m-n}] = 0 \circ [E_{m-n}] + 0 \circ [E_{m-n}]. \]

Subtracting $0 \circ [E_{m-n}]$ from both sides in the abelian group $\Ext^m_{\mathcal{C}}(X, Y)$, we conclude that $[E] = 0$. Thus, every $m$-extension is equivalent to zero.
\end{proof}
\section{Hereditariness of Pointwise Finite Dimensional Representations}\label{3}
The primary objective of this section is to establish the central result of the paper: the hereditariness of the category of pointwise finite dimensional representations of a thread category.
\begin{thm} \label{hered}
    Let $C$ be a thread category. Then $\rep^{\rm pwf}(C)$ is a hereditary category. In particular, for any objects $X, Y \in \mathrm{rep}^{\mathrm{pwf}}(C)$ and any integer $n \geq 2$, we have
    \[ \Ext^n_{\mathrm{rep}^{\mathrm{pwf}}(C)}(X, Y) = 0. \]
\end{thm}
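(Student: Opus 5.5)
By Proposition \ref{prop:ext_vanish_general}, it suffices to prove that $\Ext^2_{\mathrm{rep}^{\mathrm{pwf}}(C)}(X,Y)=0$ for all $X,Y\in\mathrm{rep}^{\mathrm{pwf}}(C)$. The vanishing of $\Ext^n$ for every $n\ge 3$ then follows automatically.

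The first step is to reduce to indecomposable or noise-separated end terms. By Theorem \ref{KS} we write $X\cong X_{NF}\oplus X_N$ and $Y\cong Y_{NF}\oplus Y_N$. Yoneda $\Ext$ commutes with finite direct sums in each variable, but $X_N$ may be an infinite direct sum of noise summands, so I cannot simply sum over indecomposables. Instead, for a class $\epsilon\in\Ext^2(X,Y)$, I would factor it as $\epsilon=\epsilon_1\circ\epsilon_2$ through the middle image $K$, with $\epsilon_1\in\Ext^1(K,Y)$ and $\epsilon_2\in\Ext^1(X,K)$. The aim is to show that $\epsilon$ is equivalent to a $2$-extension whose middle terms are quasi noise free after discarding pieces that split.

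The key tool is Theorem \ref{C2}. Given a short exact sequence in $\mathrm{rep}^{\mathrm{pwf}}(C)$ whose first term is qnf, it produces a short exact sequence inside $\mathrm{rep}^{\mathrm{qnf}}(C)$ linked to the original by a commutative diagram; dually for the last term. I would first treat the case $X,Y\in\mathrm{rep}^{\mathrm{qnf}}(C)$. Here Theorem \ref{comb prop} gives $\Ext^2_{\mathrm{rep}^{\mathrm{pwf}}(C)}(X,Y)\cong\Ext^2_{\mathrm{rep}^{\mathrm{qnf}}(C)}(X,Y)$, and the right-hand side is $0$ by Corollary \ref{qnf hered}.

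For general $X,Y$, I would use Theorem \ref{hered condition} in the form of Corollary \ref{hered corol}. It suffices to show that for any short exact sequence $0\to A\to B\xrightarrow{p} D\to 0$ in $\mathrm{rep}^{\mathrm{pwf}}(C)$ and any $M$, the map $\Ext^1(M,B)\to\Ext^1(M,D)$ is surjective. That is, every extension $0\to D\to E\to M\to 0$ must be the pushout along $p$ of an extension of $M$ by $B$. The plan is to handle the noise part directly. A noise summand supported on a single thread is an interval module on a linearly ordered set, so extensions involving only such modules can be lifted by explicit interval-module computations along the thread; this mirrors the type $A$ hereditary argument. The remaining part lives on finitely many cells of a valid partition, so Theorem \ref{C2} moves it into $\mathrm{rep}^{\mathrm{qnf}}(C)$, where Theorem \ref{thm:qnf_hered} supplies the lift. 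The lift is then transported back along the commutative diagram of Theorem \ref{C2}.

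The main obstacle is the infinitely many noise summands on a single thread. The lifts must be chosen coherently, and the decomposition into a qnf part plus noise must be compatible with the maps in the sequence, since morphisms can mix noise and noise-free summands. I would try first to use Theorem \ref{C2} to split off a qnf subsequence so that the complement consists purely of noise. I would then argue that $\Ext^1$ between noise modules on one thread is computed thread-wise. Pointwise finiteness should make it a product over summands of $\Ext^1$ of interval modules, and surjectivity can be checked factorwise.
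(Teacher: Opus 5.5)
\textbf{Verdict: genuine gap.} Your reduction to $\Ext^2$ via Proposition~\ref{prop:ext_vanish_general} is correct and matches the paper. So is your treatment of the case $X,Y\in\mathrm{rep}^{\mathrm{qnf}}(C)$ through Theorem~\ref{comb prop} and Corollary~\ref{qnf hered}; these are exactly steps (3)--(4) of the paper.

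The general case, however, is not proved. You list intentions and end by naming the obstacle rather than removing it, and several of the intended steps cannot work as stated.
\begin{enumerate}
\item Recasting the problem as right exactness of $\Ext^1(M,-)$ gains nothing. By Theorem~\ref{hered condition} it is equivalent to $\Ext^2(M,A)=0$ for all $A$.
\item The only source of vanishing in your toolkit is Corollary~\ref{qnf hered}, which concerns $\Ext^2$ between qnf objects. Theorems~\ref{C2} and~\ref{comb prop} transfer it to $\mathrm{rep}^{\mathrm{pwf}}(C)$ only when both end terms are qnf, which is the case you have already settled. Nothing in the plan handles non-qnf ends.
\item Your aim of an equivalent $2$-extension with qnf middle terms is impossible when $X$ or $Y$ is not qnf. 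Since $\mathrm{rep}^{\mathrm{qnf}}(C)$ is closed under subobjects and quotients, qnf middle terms force qnf ends.
\item The ``type $A$ hereditary argument'' cannot be quoted. A single arrow with thread $P_\alpha$ gives exactly the pwf representations of the linear order $\overline{P_\alpha}$. That is a special case of the theorem itself, with the same infinitely many noise summands.
\item Surjectivity cannot be checked factorwise, because (as you note) $p\colon B\to D$ need not respect any decomposition.
\end{enumerate}

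The missing idea is that the infinite direct sum you worry about is harmless. The condition is that you decompose only the two fixed arguments of $\Ext^2$, never the maps inside an extension. Your last sentence comes close, but you apply it to $\Ext^1$ along a sequence rather than to the arguments of $\Ext^2$.
\begin{enumerate}
\item Write $X=\bigoplus_i X_i$ and $Y=\bigoplus_j Y_j$ with indecomposable summands (Theorem~\ref{KS}). Each indecomposable is qnf, being either noise free or a single noise summand.
\item Since $Y$ is pointwise finite dimensional, only finitely many $Y_j(x)$ are nonzero at each $x$. Hence $\bigoplus_j Y_j\cong\prod_j Y_j$ in $\mathrm{rep}^{\mathrm{pwf}}(C)$ (Lemma~\ref{sumprod}).
\item Then $\Ext^2(\bigoplus_i X_i,\prod_j Y_j)\cong\prod_{i,j}\Ext^2(X_i,Y_j)$ (Lemma~\ref{lemma ext}). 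Every factor vanishes by your qnf case.
\end{enumerate}

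If you want to stay inside Yoneda $\Ext$, injectivity of this comparison map is all you need, and it is elementary.
\begin{enumerate}
\item First, $\Ext^1(\bigoplus_i X_i,W)\cong\prod_i\Ext^1(X_i,W)$ in $\mathrm{rep}^{\mathrm{pwf}}(C)$. To glue extensions $0\to W\to E_i\to X_i\to 0$, form $\bigoplus_i E_i$ in $\mathrm{Rep}(C)$ and push it out along the codiagonal $\bigoplus_i W\to W$. The result is pwf, being an extension of pwf objects.
\item Dually, $\Ext^1$ converts products in the second variable into products.
\item Finally, factor a $2$-extension through its middle image and use the long exact sequence. This shows that a class vanishing after pullback to every $X_i$, or after pushforward to every $Y_j$, is zero.
\end{enumerate}
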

The proof relies on the following preliminary results, which establish the strong connection between the category of pointwise finite dimensional representation and the category of quasi noise free representations. 

\subsection{Connections between  $\mathrm{rep}^{\rm pwf}(C)$ and $\mathrm{rep}^{\rm qnf}(C)$ }

This subsection presents our core results. We begin with Theorem \ref{C2}, the main piece of our strategy, which builds a short exact sequence in $\mathrm{rep}^{\rm qnf}(C)$ from any short exact sequence in $\mathrm{rep}^{\rm pwf}(C)$ whose first (or last) term is in $\mathrm{rep}^{\rm qnf}(C)$. Using this reduction inductively, we obtain Theorem \ref{comb prop}, which allows us to consider intermediate terms of $n$-extensions with quasi noise free ends entirely within $\rep^{\mathrm{qnf}}(C)$. Additionally, the first part of Theorem \ref{C2} gives us a second way to prove Theorem \ref{comb prop} using the derived category approach explained in Remark \ref{rem:derived_keller}, providing a less constructive but shorter proof of Theorem \ref{hered}.

\begin{thm} \label{C2}
Let $C$ be a thread category, $\mathcal A=\mathrm{rep}^{\mathrm{pwf}}(C)$, and $\mathcal B=\mathrm{rep}^{\mathrm{qnf}}(C)$. The full abelian subcategory $\mathcal B\subseteq \mathcal A$ satisfies the following conditions:
\begin{itemize}
\item for each short exact sequence $0 \longrightarrow B \xrightarrow{i} A \xrightarrow{\pi} A' \longrightarrow 0$
in $\mathcal A$ with $B\in\mathcal B$, there exists a commutative diagram
\[
\begin{tikzcd}
0 \arrow[r] & B \arrow[r,"i"] \arrow[d,equal] & A \arrow[r,"\pi"] \arrow[d,"p"] & A' \arrow[r] \arrow[d,"q"] & 0 \\
0 \arrow[r] & B \arrow[r,"i'"] & B' \arrow[r] & B'' \arrow[r] & 0
\end{tikzcd}
\]
where the second row is a short exact sequence in $\mathcal B$. Moreover, $B'$ is a direct summand of $A$.
\item for each short exact sequence $0 \rightarrow  A' \xrightarrow f A \xrightarrow{g} B \rightarrow 0$ in $\mathcal{A}$ with $B \in \mathcal{B}$, there exists a commutative diagram
\[
\begin{tikzcd}
0 \arrow[r] & A' \arrow[r, "f"] \arrow[d, "h"] & A \arrow[r, "g"] \arrow[d, "p"] & B \arrow[r] \arrow[d, equal] & 0 \\
0 \arrow[r] & B'' \arrow[r, "f'"] & B' \arrow[r, "g'"] & B \arrow[r] & 0
\end{tikzcd}
\]
where the second row is a short exact sequence in $\mathcal{B}$. Moreover $B'$ is a direct summand of $A$.
\end{itemize}
\end{thm}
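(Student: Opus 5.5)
The plan is to use the Krull--Schmidt decomposition of Theorem~\ref{KS} to split $A$ into a part with only finitely many noise summands on each thread, which will absorb the image of $B$, and a complementary noise part that can be discarded. For the first bullet, write $A\cong A_{NF}\oplus\bigoplus_{\alpha}\bigoplus_{j\in I_\alpha}N_j$. Since $B$ is quasi noise free, its canonical partition $\mathfrak P_B$ is valid by \cite[Proposition 2.7]{thread}: each thread $P_\alpha$ is cut into finitely many intervals on which the structure maps of $B$ are isomorphisms. Fix a thread $\alpha$ and a finite set of sample points, one in each cell of $\mathfrak P_B$ on $P_\alpha$. Because $B$ is pointwise finite dimensional, $\dim i(B)(x)<\infty$ at each sample point $x$. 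Hence only finitely many of the noise summands $N_j$, $j\in I_\alpha$, are hit nontrivially by the projections of $i(B)$ at these points. Let $J_\alpha\subseteq I_\alpha$ be that finite set.

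Set $B'\coloneqq A_{NF}\oplus\bigoplus_\alpha\bigoplus_{j\in J_\alpha}N_j$ and $A=B'\oplus N'$, where $N'$ collects the remaining noise summands. By construction $B'$ is quasi noise free and is a direct summand of $A$. Let $p\colon A\to B'$ be the projection. The key claim is that $i(B)\subseteq B'$, i.e.\ the composite $B\to A\to N'$ vanishes. It does so at the sample points by the choice of $J_\alpha$. Within a cell of $\mathfrak P_B$, any point $y$ is related to the sample point $x$ by an isomorphism $B(\eta)$ (in one direction or the other). Naturality of the map $B\to N'$ then forces the map to vanish at $y$ as well, using injectivity of $B(\eta_{yx})$ if $y\ge x$, or surjectivity of $B(\eta_{xy})$ if $y\le x$. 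On vertices, noise summands are zero anyway.

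So $i$ factors as $i'\colon B\to B'$, monic, with $p i=i'$. Put $B''\coloneqq B'/i'(B)$, which is quasi noise free because $\mathcal B$ is Serre in $\mathcal A$ \cite[Proposition 7.1]{thread}. Let $q\colon A'\cong A/i(B)\to B''$ be the map induced by $p$; it is well defined since $p$ is the identity on $i(B)$. Commutativity of the diagram is then immediate.

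The second bullet is dual. Decompose $A=B'\oplus N'$ in the same way, but now choose $J_\alpha$ so that the restriction of $g$ to $\bigoplus_{j\notin J_\alpha}N_j$ vanishes. Finitely many summands suffice for the following reason. At each sample point $x$ of $\mathfrak P_B$, the map $g_x$ kills all but a subspace, and $B(x)$ is finite dimensional. So a finite subfamily of noise summands already surjects onto the image of noise in $B(x)$. We may then modify the decomposition by an automorphism of $A$ so that the remaining noise summands lie in $\ker g$ at $x$. We propagate to each whole cell by the same isomorphism argument and to vertices trivially. Then $g=g'p$ with $g'=g|_{B'}$ surjective. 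We take $B''=\ker g'$, which is quasi noise free since it is a subobject of $B'$, and $h\colon A'\to B''$ the restriction of $p$.

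The main obstacle is this dual step. Vanishing of $g$ on the discarded noise summands is not automatic for an arbitrary decomposition, so it must be arranged by a change of complement. Concretely, for the finitely many relevant summands $N$ one replaces each $\iota_N$ by $\iota_N-s\circ g\circ\iota_N$, where $s$ is a section of $g$ over the finite part. Then one must check that these new summands are still noise summands and that the change remains a valid direct sum decomposition. I expect this to require working cell by cell and using that noise indecomposables are interval modules \cite[Theorem 1.15]{thread}.
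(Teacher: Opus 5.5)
Your first bullet rests on the claim $i(B)\subseteq B'$, which is false. Inside a cell the propagation only works upward. If $x$ is the sample point and $\phi\colon B\to N'$ is the composite, then for $y\ge x$ surjectivity of $B(\eta_{yx})$ gives $\phi_y=0$. For $y\le x$, naturality only gives $N'(\eta_{xy})\circ\phi_y=0$, which says nothing when the discarded summands die before $x$. For instance, take a single arrow with thread $(0,1)$, and write $k_I$ for the interval representation on $I$. Let $B=k_{(1/2,1)}$, $A=B\oplus\bigoplus_{n\ge3}k_{(1/2,\,1/2+1/n)}$ and $i=(\mathrm{id},r_3,r_4,\dots)$, with $r_n$ the canonical surjections. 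For any sample point $x\in(1/2,1)$ your $J_\alpha$ is finite, yet near $1/2$ the image of $i$ has nonzero components in arbitrarily many discarded summands. The paper never claims a factorization. It sets $i'\coloneqq p\circ i$ and proves only that $i'$ is monic: for $y\le x$, the map $B'(\eta_{xy})\circ i'_y=i_x\circ B(\eta_{xy})$ is injective, because $p_x$ is the identity on $i_x(B(x))$. Then $q$ exists because $\operatorname{coker}(i')\circ p\circ i=0$. With that change your first bullet is fine.

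In the second bullet the gap cannot be repaired as stated. Your propagation fails in the mirror direction: vanishing of $g$ on a summand at $x$ spreads only to $y\le x$, via injectivity of $B(\eta_{xy})$, and summands born above $x$ are invisible at $x$. Worse, no choice of complement can make $g$ vanish on it. Take the thread $\mathbb{Q}\cap(0,1)$, $s=\sqrt2/2$, $B=k_{(0,s)}$ and $A=k_{(0,3/4)}\oplus\bigoplus_{n\ge1}k_{(c_n,d_n)}$, with rationals $c_n\uparrow s$, $d_n\downarrow s$, $d_1<3/4$. Let $g$ be the sum of the canonical maps. Then $A$ is pwf, $B$ is qnf, and $g$ is an epimorphism that is nonzero on every $k_{(c_n,d_n)}$. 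However, $\Hom(k_{(c_n,d_n)},k_{(0,3/4)})=0$ and $\Hom(k_{(c_n,d_n)},k_{(c_m,d_m)})=0$ for $m\neq n$. By Krull--Schmidt, a qnf direct summand $B'$ of $A$ contains only finitely many of these summands. So every $p\colon A\to B'$ kills almost all of them, and $g'\circ p=g$ is impossible. Hence the diagram with a downward $p$ does not exist in general.

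The paper's argument does not produce it either. What it actually proves is that $g'\coloneqq g\circ\iota$ is an epimorphism for the inclusion $\iota\colon B'\hookrightarrow A$ of the unmodified summand, in three steps. At a sample point, $g$ vanishes on $N_j(x_k)$ for $j\notin J_k$, so $g'_{x_k}$ is onto. Above $x_k$, lifts are pushed forward inside the subrepresentation $B'$. Below $x_k$, $g$ kills $N_j$ for $j\notin J_\alpha$ by injectivity of $B(\eta_{x_k,x})$. This gives a morphism of extensions from $0\to\ker g'\to B'\to B\to0$ into the original sequence. That is exactly what the pullback variant of the induction in Theorem~\ref{comb prop} needs, so prove this reversed version rather than trying to change the complement.
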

\begin{proof}[Proof of Theorem \ref{C2}]
\textbf{Part 1.} By Theorem~\ref{KS}, the representation $A$ decomposes into noise free and noise parts as $A \cong A_{NF}\oplus A_N$. We can further decompose the noise parts along the threads as $A_N=\bigoplus_{\alpha\in Q_1} A_{N,\alpha}$. Moreover, for each thread $\alpha$, we write $A_{N,\alpha}=\bigoplus_{j\in I_\alpha} N_j$
for its decomposition into indecomposable noise summands. For each arrow $\alpha$ and each $j \in I_\alpha$, let $p_j \colon A \to N_j$ denote the canonical projection.

Since $B\in \mathrm{rep}^{\mathrm{qnf}}(C)$, the associated canonical partition $\mathfrak P_B$ is valid. Fix a thread $\alpha$, and let $\mathfrak P_{B,\alpha}$
be the restriction of $\mathfrak P_{B}$ to the thread $P_\alpha$. Let $m$ be the number of cells in $\mathfrak{P}_{B,\alpha}$. Choose a single sample point from each cell, yielding a finite set of points $x_1, \dots, x_m$. For each chosen point
$x_k$, the vector space $A(x_k)$ is finite dimensional, meaning that for only finitely many noise summands $N_j(x_k)$ is non-zero. Therefore, the linear map $p_{j,x_k} \circ i_{x_k} \colon B(x_k) \to N_j(x_k)$ is non-zero for only finitely many indices $j \in I_\alpha$. We denote this finite set by
\[ J_k\coloneqq\{ j \in I_\alpha \mid p_{j,x_k} \circ i_{x_k} \neq 0 \}. \]
Let $J_\alpha\coloneqq \bigcup_{k=1}^m J_k$. Applying this construction to all threads, we define
\[
B'\coloneqq A_{NF}\oplus\Bigl(\bigoplus_{\alpha\in Q_1}\bigoplus_{j\in J_\alpha} N_j\Bigr).
\]
By construction, $B'$ is a direct summand of $A$ and 
$B'\in\mathrm{rep}^{\mathrm{qnf}}(C)$. Let $p\colon A\to B'$ be the 
canonical projection and $i'\coloneqq p\circ i$.

We claim $i'$ is a monomorphism. If $x$ is a vertex of $Q$, then all noise summands vanish at $x$, so $B'(x)=A_{NF}(x)=A(x)$. Hence $p_x=\mathrm{id}_{A(x)}$, and therefore $i'_x=i_x$, which is injective. Now, let $x$ lie in a thread $P_\alpha$ and let $v\in B(x)$ be nonzero. Let $x_k$ be the sample point of the 
cell of $\mathfrak{P}_{B,\alpha}$ containing $x$. By definition of the 
canonical partition, the structural map of $B$ between $x$ and $x_k$ 
is an isomorphism.

Assume first that $x\leq x_k$: let $w\coloneqq B(\eta_{x_k,x})(v)\neq 0$. By naturality, 
\[i'_{x_k}(w)=B'(\eta_{x_k,x})(i'_x(v)).\] By the construction of $J_k$, 
 $i(B(x_k))\subset B'(x_k)$, so the projection $p_{x_k}$ is the identity.
Therefore, $i'_{x_k}(w)=i_{x_k}(w)$ which is non-zero because $i$ is a monomorphism. Hence 
$i'_x(v)\neq 0$.

Assume now that $x_k \leq x$. Let $w \in B(x_k)$ be such that 
$B(\eta_{x, x_k})(w) = v$; this exists because the structural maps are isomorphisms inside the same cell. By naturality of $i$,
\[
i_x(v) = A(\eta_{x, x_k})(i_{x_k}(w)).
\]
We claim that for any index $j \notin J_k$, the projection of $i_x(v)$ onto $N_j(x)$ is zero. By the naturality of the projection $p_j$, it commutes with the structural maps, yielding:
\[
p_{j,x}(i_x(v)) = p_{j,x}\big(A(\eta_{x, x_k})(i_{x_k}(w))\big) = N_j(\eta_{x, x_k})\big(p_{j,x_k}(i_{x_k}(w))\big).
\]
By the definition of $J_k$, if $j \notin J_k$, then the map $p_{j,x_k} \circ i_{x_k}$ is identically zero. Therefore, $p_{j,x_k}(i_{x_k}(w)) = 0$, which forces also $p_{j,x}(i_x(v))$ to be $0$. This proves that $i_x(v) \in B'(x)$. Since $p_x \colon A(x) \to B'(x)$ is the canonical projection onto $B'(x)$, it acts as the identity on $i_x(v)$, yielding:
\[
i'_x(v) = p_x(i_x(v)) = i_x(v).
\]
Since $i_x$ is injective and $v \neq 0$, it follows that $i'_x(v) \neq 0$. We have shown $i'_x$ is injective for every $x$, so $i'$ is a monomorphism. 

Setting $B''$ as $\operatorname{Coker}(i')$, we obtain the short exact sequence
\[
0 \longrightarrow B \xrightarrow{i'} B' \longrightarrow B'' \longrightarrow 0.
\]
Since $\mathrm{rep}^{\mathrm{qnf}}(C)$ is a Serre subcategory of $\mathrm{rep}^{\mathrm{pwf}}(C)$ (and thus closed under quotients), $B''$ is also in $\mathrm{rep}^{\mathrm{qnf}}(C)$. By the universal property of $A'$ as the cokernel of the first row, there is a unique morphism $q\colon A' \to B''$ making the diagram commute. This completes the proof of the first part.

\textbf{Part 2.} The construction of $B'$ follows a strategy similar to the first part.  As before, we consider the decomposition $A \cong A_{NF}\oplus\big( \bigoplus_{\alpha\in Q_1} A_{N,\alpha}\big)$, where each noise part further decomposes into $A_{N,\alpha} = \bigoplus_{j\in I_\alpha} N_j$. For each $j \in I_\alpha$, let $\iota_j \colon N_j \hookrightarrow A$ denote the canonical inclusion.

For each thread $\alpha$, let $x_1, \dots, x_{m_\alpha}$ be the sample points obtained by picking one point inside each cell of the canonical partition $\mathfrak{P}_{B}$. For each chosen point $x_k$, the space $A(x_k)$ is finite-dimensional. Consequently, the set of indices 
\[  J_k\coloneqq\{ j \in I_\alpha \mid g_{x_k} \circ \iota_{j,x_k} \colon N_j(x_k) \to B(x_k) \text{ is not the zero map}\}\] is finite.
Setting $J_\alpha \coloneqq \bigcup_{k=1}^{m_\alpha} J_k$ for each arrow $\alpha$ , we construct the following subrepresentation of$A$:
\[
B' \coloneqq A_{NF} \oplus \Bigl( \bigoplus_{\alpha \in Q_1} \bigoplus_{j \in J_\alpha} N_j \Bigr).
\]
By construction, $B'$ is a direct summand of $A$ and $B' \in \mathrm{rep}^{\mathrm{qnf}}(C)$. Let $\iota \colon B' \hookrightarrow A$ be the canonical inclusion and define $g' \coloneqq g \circ \iota \colon B' \to B$.

We claim $g'$ is an epimorphism. If $x$ is a vertex of $Q$, all noise summands vanish at $x$, so $B'(x) = A_{NF}(x) = A(x)$. Thus, $g'_x = g_x$, which is surjective. 

Let us show now that $g'_{x_k}=g_{x_k}|_{B'(x_k)}$ is surjective for all sample points. Given $b_k \in B(x_k)$, by the surjectivity of $g_{x_k}$ there exists $a \in A(x_k)$ such that $g_{x_k}(a) = b_k$. We can decompose $a = a' + c$, where $a' \in B'(x_k)$ and $c$ is the sum of the components in $N_j(x_k)$ for $j \notin J_k$. By the definition of $J_k$, the map $g_{x_k}$ vanishes on $N_j(x_k)$ for all $j \notin J_k$, yielding $g_{x_k}(c) = 0$. Therefore, $g'_{x_k}(a') = g_{x_k}(a') = g_{x_k}(a) = b_k$, which proves the surjectivity.

Now, consider an arbitrary point $x$ on a thread $P_\alpha$, and let $b \in B(x)$. Let $x_k$ be the sample point of the cell of $\mathfrak{P}_{B,\alpha}$ containing $x$; the structural map of $B$ between $x$ and $x_k$ is an isomorphism.

Assume first that $x_k \leq x$ and let $b\in B(x)$. Let $b_k \coloneqq B(\eta_{x, x_k})^{-1}(b) \in B(x_k)$, which exists because the structural maps are isomorphisms inside the same cell. By the step above, there exists $a_k \in B'(x_k)$ with $g'_{x_k}(a_k) = b_k$. Set $a \coloneqq A(\eta_{x, x_k})(a_k)$. Since $B'$ is a subrepresentation of $A$, it is invariant under the structural maps of $A$; therefore, $a_k \in B'(x_k)$ immediately forces $a \in B'(x)$. By the naturality of $g$, we have:
\[
g'_x(a) = g_x(a) = g_{x}\circ A(\eta_{x, x_k})(a_k)=B(\eta_{x, x_k})\circ(g_{x_k}(a_k)) = B(\eta_{x, x_k})(b_k) = b.
\]
This proves that $g'_x$ is surjective in this case.

Assume now that $x \leq x_k$. By the surjectivity of $g_x$, there exists $a \in A(x)$ such that $g_x(a) = b$. We can decompose this element as $a = a' + c$, where
\[ a' = a_{NF} + \sum_{j \in J_\alpha} a_j \in B'(x) \quad \text{and} \quad c = \sum_{j \notin J_\alpha} a_j. \]
We claim $g_x(c) = 0$. For each $j \notin J_\alpha$, the element $a_j$ belongs to $N_j(x)$. By the naturality of $g$ applied to the inclusion $\iota_j$, we have:
\[
B(\eta_{x_k, x})(g_x(a_j)) = g_{x_k}\big(A(\eta_{x_k, x})(a_j)\big) = (g_{x_k} \circ \iota_{j,x_k})\big(N_j(\eta_{x_k, x})(a_j)\big).
\]
By the definition of $J_k$ (and since $j \notin J_\alpha \supseteq J_k$), the map $g_{x_k} \circ \iota_{j,x_k}$ is identically zero. Therefore, the right-hand side is zero, yielding $B(\eta_{x_k, x})(g_x(a_j)) = 0$. Since $B(\eta_{x_k, x})$ is an isomorphism, this forces $g_x(a_j) = 0$. Summing over all $j \notin J_\alpha$ gives $g_x(c) = 0$, from which we conclude:
\[ g'_x(a') = g_x(a') = g_x(a) = b. \]

We have shown that $g'_x$ is surjective for every $x$, so $g'$ is an epimorphism. Setting $B'' \coloneqq \ker(g')$, we obtain the short exact sequence:
\[
0 \longrightarrow B'' \xrightarrow{f'} B' \xrightarrow{g'} B \longrightarrow 0.
\]
Since $\mathrm{rep}^{\mathrm{qnf}}(C)$ is a Serre subcategory of $\mathrm{rep}^{\mathrm{pwf}}(C)$ and is therefore closed under subobjects, $B'' \in \mathrm{rep}^{\mathrm{qnf}}(C)$. By the universal property of $A'$ as the kernel of the first row, there is a unique morphism $h \colon A' \to B''$ making the diagram commute. This completes the proof.
\end{proof}

\begin{thm}\label{comb prop}
 Let $C$ be a thread category, $\mathcal{A} = \mathrm{rep}^{\mathrm{pwf}}(C)$, $\mathcal{B} = \mathrm{rep}^{\mathrm{qnf}}(C)$ and $n \geq 1$. Let $X,Z$ be two objects in $\mathcal{B}$. Every $n$-extension $E \in \Ext_{\mathcal{A}}^n(X, Z)$ is equivalent to an $n$-extension $F$ where all terms belong to $\mathcal{B}$. In particular, for each $n\geq1$, one has
\[
\Ext_{\mathcal{A}}^n(X, Z)=\Ext_{\mathcal{B}}^n(X, Z).
\]
\end{thm}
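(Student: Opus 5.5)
The plan is to prove the first assertion by induction on $n$, using Part 1 of Theorem~\ref{C2} to push the left end of an $n$-extension into $\mathcal B$, and then to deduce the isomorphism of $\Ext$ groups from it together with a compatibility check on equivalences.

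For $n=1$, let $E\colon 0\to Z\to A\to X\to 0$ with $X,Z\in\mathcal B$. Since $\mathcal B$ is a Serre subcategory, it is closed under extensions, so $A\in\mathcal B$ and $F=E$ works.

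For $n\geq 2$, write
\[
E\colon 0\to Z\xrightarrow{i} A_1\to A_2\to\cdots\to A_n\to X\to 0 .
\]
Let $K$ be the image of $A_1\to A_2$, and split $E=E_1\circ E_2$ with $E_1\colon 0\to Z\to A_1\to K\to 0$ and $E_2\in\Ext^{n-1}_{\mathcal A}(X,K)$. Apply Part 1 of Theorem~\ref{C2} to $E_1$, which is possible since $Z\in\mathcal B$. This gives $0\to Z\to B'\to B''\to 0$ in $\mathcal B$ together with maps $p\colon A_1\to B'$ and $q\colon K\to B''$.

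Push $E_2$ out along $q$. Form $P=B''\sqcup_K A_2$, replace $A_2$ by $P$, and keep $A_3,\dots,A_n,X$ unchanged. The standard pushout argument shows the resulting sequence $q_*E_2\colon 0\to B''\to P\to A_3\to\cdots\to X\to 0$ is exact. The morphism of diagrams $E\to F_1\circ q_*E_2$ is given by $\mathrm{id}_Z, p$, the canonical map $A_2\to P$, and identities elsewhere. Commutativity at the splice point uses $q\circ s=(B'\to B'')\circ p$, where $s\colon A_1\to K$. Since this morphism is the identity on both ends, it exhibits $E\sim F_1\circ q_*E_2$, with $F_1$ the $\mathcal B$-row.

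Now $q_*E_2$ is an $(n-1)$-extension in $\mathcal A$ with ends $X,B''\in\mathcal B$. By induction it is equivalent to an $(n-1)$-extension $F_2$ with all terms in $\mathcal B$. Splicing respects equivalence, because a morphism of extensions with fixed ends splices with the identity on $F_1$. Hence $E\sim F_1\circ F_2$, whose terms all lie in $\mathcal B$.

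For the equality of groups, the natural map $\Ext^n_{\mathcal B}(X,Z)\to\Ext^n_{\mathcal A}(X,Z)$ is a homomorphism, since Baer sums are computed by pullbacks and pushouts, which agree in the full abelian subcategory $\mathcal B$. The construction above shows it is surjective.

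Injectivity is the main obstacle. Yoneda equivalence in $\mathcal A$ is generated by zigzags $E\to G\leftarrow F$ through extensions $G$ whose middle terms need not lie in $\mathcal B$. One option is to argue via Baer sums: if $[F]=0$ in $\mathcal A$, it suffices to show $F$ is equivalent in $\mathcal B$ to the trivial extension. I would instead handle each zigzag step by replacing the intermediate $G$ with a $\mathcal B$-extension $G'$ that receives a compatible map. The key is that in Part 1 of Theorem~\ref{C2} the map $p$ is a projection onto a direct summand chosen by a finite set of noise indices. Enlarging the index sets $J_\alpha$ to include those needed by both $E$ and $F$ keeps everything quasi noise free, so the reduction can be made functorial along the maps $E\to G$ and $F\to G$. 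Running the induction simultaneously on the two legs, so that the $\mathcal B$-replacements of $E$ and $F$ become $E$ and $F$ themselves, would produce a zigzag inside $\mathcal B$.

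A more abstract alternative is to invoke Keller's criterion, as the paper indicates. The first part of Theorem~\ref{C2}, namely that every monomorphism from an object of $\mathcal B$ into $\mathcal A$ factors through a mono into an object of $\mathcal B$ compatibly, is exactly the hypothesis ensuring that $D^b(\mathcal B)\to D^b(\mathcal A)$ is fully faithful. That gives $\Ext^n_{\mathcal B}(X,Z)\cong\Ext^n_{\mathcal A}(X,Z)$ directly.
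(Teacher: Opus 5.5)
Your proof of the first assertion is the paper's own proof: factor $E$ at the first image, apply Part~1 of Theorem~\ref{C2} to the left short exact sequence, push the remaining $(n-1)$-extension out along $q$, splice the two commutative diagrams, and induct. You are right that the equality $\Ext^n_{\mathcal A}(X,Z)=\Ext^n_{\mathcal B}(X,Z)$ also needs injectivity, which the paper's constructive proof does not address because it only produces a $\mathcal B$-representative in each class; your zigzag sketch is not yet a proof as written, but the Keller argument you end with is exactly the paper's Remark~\ref{rem:derived_keller} and settles the point.
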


\begin{proof}
We proceed by induction on $n$.

\textbf{Base case ($n=1$).} This is immediate, since, as recalled in the preliminaries, $\mathrm{rep}^{\mathrm{qnf}}(C)$ is a Serre subcategory of $\mathrm{rep}^{\mathrm{pwf}}(C)$, and therefore it is closed under extensions.

\textbf{Inductive step.} Assuming the result holds for $n-1$, we now prove it for $n$. Let $E$ be an $n$-extension in $\mathrm{rep}^{\mathrm{pwf}}(C)$ of the form
\[
E: 0 \to Z \to A_1 \to A_2 \rightarrow A_3 \to \dots \to A_n \to X \to 0.
\]
We factor $E$ at the first intermediate image, $K_1=\operatorname{Im}(A_1\to A_2)$, into the Yoneda splice $E=E_1\circ E'$, where $E_1$ is the short exact sequence
\[
E_1: 0 \to Z \to A_1 \to K_1 \to 0,
\]
and $E'$ is the $(n-1)$-extension starting from $K_1$,
\[
E': 0 \to K_1 \to A_2 \rightarrow A_3 \to \dots \to A_n \to X \to 0.
\]
Applying Theorem~\ref{C2} to $E_1$, we obtain the commutative diagram
\begin{equation}\label{first}
\begin{tikzcd}
E_1:\;0 \arrow[r] & Z \arrow[r] \arrow[d,equal] & A_1 \arrow[r] \arrow[d] & K_1 \arrow[r] \arrow[d,"q"] & 0 \\
F_1:\; 0 \arrow[r] & Z \arrow[r] & B_1 \arrow[r] & K'_1 \arrow[r] & 0
\end{tikzcd}
\end{equation}
where $F_1: 0 \to Z \to B_1 \to K'_1 \to 0$ is a short exact sequence in $\mathrm{rep}^{\mathrm{qnf}}(C)$.
We now construct a new $(n-1)$-extension by pushing out $E'$ along the morphism $q\colon K_1\to K'_1$. Let $\widetilde{A}_2$ be the corresponding pushout. Since pushouts preserve cokernels in abelian categories, this yields an exact sequence
\[
\widetilde{E}': 0 \to K'_1 \to \widetilde{A}_2 \to A_3 \to \dots \to A_n \to X \to 0.
\]
This construction yields a morphism of $(n-1)$-extensions where $q$ acts on the leftmost terms:
\begin{equation}\label{second}
\begin{tikzcd}
E': & 0 \arrow[r] & K_1 \arrow[r] \arrow[d, "q"] & A_2 \arrow[r] \arrow[d] & A_3 \arrow[r] \arrow[d, equal] & \dots \arrow[r] & X \arrow[r] \arrow[d, equal] & 0 \\
\widetilde{E}': & 0 \arrow[r] & K'_1 \arrow[r] & \widetilde{A}_2 \arrow[r] & A_3 \arrow[r] & \dots \arrow[r] & X \arrow[r] & 0
\end{tikzcd}
\end{equation}
Since the morphism $q$ is at the end of Diagram~\ref{first} and at the beginning of Diagram~\ref{second}, we can splice the two commutative diagrams together. This shows that the Yoneda product of the top rows is equivalent to the Yoneda product of the bottom rows, yielding $[E] = [E_1] \circ [E'] = [F_1] \circ [\widetilde{E}']$.
Now notice that $\widetilde{E}'$ is an $(n-1)$-extension whose end terms, $K'_1$ and $X$, both belong to $\mathrm{rep}^{\mathrm{qnf}}(C)$. By the inductive hypothesis, $\widetilde{E}'$ is equivalent to an $(n-1)$-extension $F'$ all of whose terms belong to $\mathrm{rep}^{\mathrm{qnf}}(C)$. Therefore, $E$ is equivalent to the splice $F_1\circ F'$. Since both $F_1$ and $F'$ lie entirely in $\mathrm{rep}^{\mathrm{qnf}}(C)$, their Yoneda product is an $n$-extension with all terms in $\mathrm{rep}^{\mathrm{qnf}}(C)$. This completes the induction.

Alternatively, one could also argue by splitting the $n$-extension from the right. By applying the second part of Theorem \ref{C2} to the short exact sequence ending in $X$, one can construct an equivalent extension in $\mathrm{rep}^{\mathrm{qnf}}(C)$ by using, in the inductive step, a pullback instead of a pushout.
\end{proof}

The following remark shows an alternative way to reach the same result of the theorem above.
\begin{rem}[Alternative proof of Theorem~\ref{comb prop} via derived categories]\label{rem:derived_keller}
While the proof of Theorem \ref{comb prop} given above is constructive, it can be seen as a consequence of a result of Keller. Indeed, because of Theorem \ref{C2}, \cite[Theorem~12.1]{kell} implies that the canonical inclusion functor between the derived categories $D(\mathrm{rep}^{\mathrm{qnf}}(C))$ and $D(\mathrm{rep}^{\mathrm{pwf}}(C))$
is fully faithful. Hence, for every $X,Z\in \mathrm{rep}^{\mathrm{qnf}}(C)$ and $n\geq 1$, we obtain a natural isomorphism
\[
\Hom_{D(\mathrm{rep}^{\mathrm{pwf}}(C))}(X,Z[n])
\cong
\Hom_{D(\mathrm{rep}^{\mathrm{qnf}}(C))}(X,Z[n]).
\]

Now, by \cite[Lemma~13.27.5]{stacks} for any abelian category $\mathcal A$ one has
\[
\Ext^n_{\mathcal A}(X,Z)\cong \Hom_{D(\mathcal A)}(X,Z[n]).
\]
Applying this to $\mathrm{rep}^{\mathrm{pwf}}(C)$ and $\mathrm{rep}^{\mathrm{qnf}}(C)$, and combining it with the full faithfulness of the inclusion functor, it follows that
\[
\Ext^n_{\mathrm{rep}^{\mathrm{pwf}}(C)}(X,Z)
\cong
\Ext^n_{\mathrm{rep}^{\mathrm{qnf}}(C)}(X,Z).
\]
This is exactly the conclusion of Theorem~\ref{comb prop}.

We maintain the explicit approach as our main argument because the constructive proof is of independent interest: it shows how to replace the intermediate terms of an $n$-extension in $\mathrm{rep}^{\mathrm{pwf}}(C)$ with objects from $\mathrm{rep}^{\mathrm{qnf}}(C)$. This explicit reduction mechanism is lost in the derived category framework. 
\end{rem}
\subsection{Products and coproducts in $\rep^{\mathrm{pwf}}(C)$}
In this subsection, we establish two homological properties of $\mathrm{rep}^{\mathrm{pwf}}(C)$ needed to prove Theorem~\ref{hered}. The first lemma allows us to reduce $\Ext$ computations from arbitrary representations to their individual indecomposable summands. The second establishes that products and coproducts coincide in $\mathrm{rep}^{\mathrm{pwf}}(C)$ whenever they are well-defined.

\begin{lemma} \label{lemma ext}
Let $C$ be a thread category. Let $\{X_i\}_{i \in I}$ and $\{Y_j\}_{j \in J}$ be families of objects in $\mathrm{rep}^{\mathrm{qnf}}(C)$ such that $\bigoplus_{i \in I} X_i$ and $\prod_{j \in J} Y_j $ are $ \mathrm{rep}^{\mathrm{pwf}}$. Then, there is a natural isomorphism:
$$ \Ext^n_{\mathrm{rep}^{\mathrm{pwf}}(C)}\Big(\bigoplus_{i \in I} X_i, \prod_{j \in J} Y_j\Big) \cong \prod_{i \in I} \prod_{j \in J} \Ext^n_{\mathrm{rep}^{\mathrm{pwf}}(C)}(X_i, Y_j). $$
\end{lemma}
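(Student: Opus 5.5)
The plan is to prove the isomorphism first for $n=1$ via the universal properties, and then treat higher $n$ by reducing to $\mathrm{rep}^{\mathrm{qnf}}(C)$, where $\Ext^n$ for $n\geq 2$ vanishes.

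\textbf{Step 1: the $n=1$ case.} The key observation is that $X=\bigoplus_{i} X_i$ and $Y=\prod_j Y_j$, when they lie in $\mathrm{rep}^{\mathrm{pwf}}(C)$, are a genuine coproduct and product in $\mathrm{rep}^{\mathrm{pwf}}(C)$. This holds because they are computed pointwise in $\mathrm{Rep}(C)$, and $\mathrm{rep}^{\mathrm{pwf}}(C)$ is a full subcategory containing them. Pointwise finiteness forces, at each $x$, only finitely many $X_i(x)$ (resp.\ $Y_j(x)$) to be nonzero, so at each point the sum and the product agree.

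For $\Ext^1$ I use Yoneda's description directly. The natural map
\[
\Ext^1(X,Y)\to \prod_{i,j}\Ext^1(X_i,Y_j)
\]
sends $[E]$ to the pullback along $\iota_i\colon X_i\to X$ followed by the pushout along $\pi_j\colon Y\to Y_j$. To show it is bijective, I build an inverse. Given classes $E_{ij}$, I first form, for fixed $i$, the extension of $X_i$ by $Y$. This uses the standard description of extensions of $X_i$ by a product: take the fibre product over $X_i$ of the $E_{ij}$, which is computed pointwise. Its middle term is pointwise finite, since at each point only finitely many $Y_j(x)\neq 0$, and the middle terms of $E_{ij}$ are also nonzero only there. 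I then take the direct sum over $i$ of these extensions of $X_i$ by $Y$, pushed out along the codiagonal on $Y^{\oplus I}\to Y$. This sum is again pointwise finite, because at $x$ only finitely many $X_i(x)\neq0$. Checking that the two maps are mutually inverse is routine Baer-sum bookkeeping, verified pointwise.

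\textbf{Step 2: higher $n$.} Both sides vanish. Since $\mathrm{rep}^{\mathrm{qnf}}(C)$ is not closed under infinite sums (infinitely many noise summands may appear on one thread), $X$ and $Y$ need not lie in $\mathrm{rep}^{\mathrm{qnf}}(C)$, so Theorem \ref{comb prop} cannot be applied to them directly. The right-hand side, however, is $0$ for $n\geq2$: each $X_i,Y_j\in\mathrm{rep}^{\mathrm{qnf}}(C)$, so Theorem \ref{comb prop} together with Corollary \ref{qnf hered} gives $\Ext^n_{\mathrm{rep}^{\mathrm{pwf}}(C)}(X_i,Y_j)=0$.

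For the left-hand side I argue by dimension shifting. I express an $n$-extension of $X$ by $Y$ as $E_1\circ E'$ with $E_1\in\Ext^1(K,Y)$. I then try to show that $\Ext^1(-,Y)$ is right exact on the relevant sequences, by decomposing $K$ via Theorem \ref{KS} and using the $n=1$ case, which expresses $\Ext^1(K,Y)$ as a product of $\Ext^1$'s between indecomposables. The indecomposables lie in $\mathrm{rep}^{\mathrm{qnf}}(C)$, where right exactness holds by Theorem \ref{thm:qnf_hered}. This yields naturality in $K$ compatible with the decomposition, so the connecting map kills $E'$. Naturality of the whole construction in both variables follows because everything is built from the universal maps $\iota_i$ and $\pi_j$.

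\textbf{Main obstacle.} The hard part is the $n\geq2$ case for non-qnf $X$ and $Y$. Reducing to indecomposables requires the isomorphism of Step 1 to be natural in the decomposition, and also requires that the morphisms in the long exact sequence, which are not diagonal with respect to the decompositions, still preserve surjectivity in the product. If this fails, my fallback is to prove the lemma only for $n=1$ and derive the higher vanishing afterwards in Theorem \ref{hered} via Corollary \ref{hered corol}.
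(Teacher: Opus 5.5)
\textbf{There is a genuine gap in Step~2, the case $n\geq 2$.}

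Your Step~1 is fine. It amounts to observing that $\mathrm{rep}^{\mathrm{pwf}}(C)$ is closed under extensions in $\mathrm{Rep}(C)$, where sums and products are exact, so $\Ext^1$ can be computed there. Your finiteness justifications are slightly off: the middle terms of the $E_{ij}$ are nonzero wherever $X_i$ is, and the un-pushed-out sum contains $Y(x)^{(I)}$. Still, the objects you finally produce are extensions of finite-dimensional spaces, hence pwf.

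For $n\geq 2$ the right-hand side does vanish. But showing that the left-hand side vanishes for arbitrary, non-qnf $X$ and $Y$ is exactly Theorem~\ref{hered}, which the paper deduces from this lemma. Your dimension-shifting sketch does not give an independent proof of it.

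To kill $[E_1]\circ[E']$ you need $\Ext^1(A_2,Y)\to\Ext^1(K,Y)$ to be surjective for an arbitrary monomorphism $K\hookrightarrow A_2$ in $\mathrm{rep}^{\mathrm{pwf}}(C)$. The product formula rewrites both groups as products over indecomposable summands. However, Krull--Schmidt decompositions are not functorial, so the restriction map is not diagonal. Moreover, Theorem~\ref{thm:qnf_hered} only gives right exactness for short exact sequences \emph{inside} $\mathrm{rep}^{\mathrm{qnf}}(C)$. Neither $0\to K\to A_2\to X\to 0$ nor any ``component'' of it lies there.

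Your fallback fails for the same reason. Corollary~\ref{hered corol} needs right exactness of $\Ext^1_{\mathrm{rep}^{\mathrm{pwf}}(C)}(M,-)$ on all short exact sequences of $\mathrm{rep}^{\mathrm{pwf}}(C)$, and the degree-one product formula does not provide this. It would also not prove the lemma as stated.

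\textbf{How the paper proceeds, and a possible repair.} The paper never passes through vanishing. It identifies $\Ext^n$ with $\Hom_{D(\mathrm{rep}^{\mathrm{pwf}}(C))}(-,-[n])$ and uses that the degreewise (co)products are preserved by the localization functor. Then $\Hom$ in the derived category converts the coproduct in the first slot and the product in the second into $\prod_i\prod_j$, uniformly in $n$.

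If you prefer to stay with Yoneda extensions, note that in degree~$2$ you only need \emph{injectivity} of the comparison map, since its target is $0$. This can be obtained by gluing in $\mathrm{Rep}(C)$:
\begin{enumerate}
\item Write $[E]=[E_1]\circ[E_2]$ with $E_2\colon 0\to K\to A_2\to X\to 0$.
\item If every $\iota_i^*[E]=0$, the long exact sequence gives lifts $G_i$ of $E_1$ to $A_2\times_X X_i$.
\item Push $\bigoplus_i G_i$ out along the codiagonal of $Y$. This gives an extension of $\bigoplus_i A_2\times_X X_i$ whose restriction to $K^{(I)}$ is $\nabla^*[E_1]$.
\item That restriction vanishes on $\ker(\nabla\colon K^{(I)}\to K)$, which is precisely the kernel of the epimorphism onto $A_2$. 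So the extension descends to a lift of $E_1$ to $A_2$.
\item This lift lies in $\Ext^1_{\mathrm{rep}^{\mathrm{pwf}}(C)}$ because $A_2$ and $Y$ are pwf, so $[E]=0$.
\end{enumerate}
The product side is dual. Together with Lemma~\ref{sumprod}, this gives $\Ext^2_{\mathrm{rep}^{\mathrm{pwf}}(C)}=0$, and Proposition~\ref{prop:ext_vanish_general} then handles $n\geq 3$.
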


\begin{proof}
 By standard properties of derived categories (see \cite[Lemma 13.27.5]{stacks}), we can express the $\Ext^n$ group as a morphism vector space in the derived category $D(\mathrm{rep}^{\mathrm{pwf}}(C))$:
$$ \Ext^n_{\mathrm{rep}^{\mathrm{pwf}}(C)}\Big(\bigoplus_{i \in I} X_i, \prod_{j \in J} Y_j\Big) \cong \mathrm{Hom}_{D(\mathrm{rep}^{\mathrm{pwf}}(C))}\Big(\bigoplus_{i \in I} X_i, \prod_{j \in J} Y_j[n]\Big). $$

Recall that limits and colimits of complexes are computed degreewise in the homotopy category and, by \cite[Proposition 10.2.8]{kashiwara}, they are preserved by the localization functor to the derived category. Therefore, using the standard properties about the relations between (co)products and homomorphisms, we have the isomorphisms:
\[\mathrm{Hom}_{ D(\mathrm{rep}^{\mathrm{pwf}}(C))} \left( \bigoplus_{i \in I} A_i, X \right) \cong \prod_{i \in I} \mathrm{Hom}_{ D(\mathrm{rep}^{\mathrm{pwf}}(C))}(A_i, X), \]
\[ \mathrm{Hom}_{ D(\mathrm{rep}^{\mathrm{pwf}}(C))} \left( X, \prod_{j \in J} B_j \right) \cong \prod_{j \in J} \mathrm{Hom}_{ D(\mathrm{rep}^{\mathrm{pwf}}(C))}(X, B_j).\] 
Applying these general isomorphisms, $\mathrm{Hom}_{D(\mathrm{rep}^{\mathrm{pwf}}(C))}\Big(\bigoplus_{i \in I} X_i, \prod_{j \in J} Y_j[n]\Big)$ decomposes naturally as:
\[ \prod_{i \in I} \prod_{j \in J} \mathrm{Hom}_{D(\mathrm{rep}^{\mathrm{pwf}}(C))}(X_i, Y_j[n]). \]
Finally, using the standard identification between morphisms in the derived category and $\Ext$ groups once more, this space is naturally isomorphic to
$$ \prod_{i \in I} \prod_{j \in J} \Ext^n_{\mathrm{rep}^{\mathrm{pwf}}(C)}(X_i, Y_j). $$
This yields the desired isomorphism and completes the proof.
\end{proof}

\begin{lemma} \label{sumprod}
Let $C$ be a thread category and let $\{R_i\}_{i\in I}$ be a family of objects in $\mathrm{rep}^{\mathrm{pwf}}(C)$. If either the product $\prod_{i\in I} R_i$ or the coproduct $\bigoplus_{i\in I} R_i$ belongs to $\mathrm{rep}^{\mathrm{pwf}}(C)$, then they both do, and we have an isomorphism
\[\prod_{i\in I} R_i \cong \bigoplus_{i\in I} R_i \]
in $\mathrm{rep}^{\mathrm{pwf}}(C)$.
\end{lemma}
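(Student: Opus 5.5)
The plan is to compare the product and the coproduct pointwise in the ambient category $\mathrm{Rep}(C)$, where both exist and are computed objectwise. For each object $x\in C$ we have $(\bigoplus_i R_i)(x)=\bigoplus_i R_i(x)$ and $(\prod_i R_i)(x)=\prod_i R_i(x)$, with structure maps induced componentwise. The key observation is that both pointwise finiteness conditions amount to the same combinatorial condition: for every $x$, only finitely many $i$ have $R_i(x)\neq 0$, and $\sum_i \dim R_i(x)<\infty$. Indeed, a direct sum or a direct product of vector spaces is finite dimensional if and only if all but finitely many factors vanish and the remaining ones are finite dimensional. An infinite product of nonzero spaces contains the corresponding infinite direct sum, hence is infinite dimensional.

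So if either construction in $\mathrm{Rep}(C)$ is pointwise finite dimensional, then at each $x$ the set $I_x=\{i : R_i(x)\neq 0\}$ is finite. Consequently the canonical map $\bigoplus_i R_i\to\prod_i R_i$ in $\mathrm{Rep}(C)$, given componentwise by the identity, is an isomorphism at every $x$: for a finite index set $I_x$, the sum and the product agree, and the factors with $i\notin I_x$ contribute zero. This map is natural in $x$ because it is built from the canonical inclusions and projections, which commute with the componentwise structure maps. Therefore it is an isomorphism of functors, and both objects lie in $\mathrm{rep}^{\mathrm{pwf}}(C)$.

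It remains to check that these objects are really the product and coproduct in $\mathrm{rep}^{\mathrm{pwf}}(C)$, and not merely in $\mathrm{Rep}(C)$. This follows because $\mathrm{rep}^{\mathrm{pwf}}(C)$ is a full subcategory. If the $\mathrm{Rep}(C)$-product lies in the subcategory, it satisfies the universal property there as well, and the same holds for the coproduct. I would also note the converse interpretation. If the statement's hypothesis means only that a product exists in $\mathrm{rep}^{\mathrm{pwf}}(C)$, one should argue that it coincides with the pointwise one. For this I would evaluate at representable-type test objects, or simply adopt the paper's convention that ``belongs to $\mathrm{rep}^{\mathrm{pwf}}(C)$'' refers to the pointwise construction. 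I expect this interpretive point to be the only real subtlety. The main step, the finite-support observation at each object, is elementary linear algebra.
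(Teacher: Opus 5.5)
Your proposal is correct and follows essentially the same route as the paper: compute both constructions pointwise in $\mathrm{Rep}(C)$, observe that finite dimensionality at each object $x$ forces only finitely many $R_i(x)$ to be nonzero, and use that finite sums and products of vector spaces agree. Your use of the canonical comparison map $\bigoplus_{i} R_i\to\prod_{i} R_i$ (which gives naturality) and your remark that the universal properties pass to the full subcategory $\mathrm{rep}^{\mathrm{pwf}}(C)$ make precise what the paper leaves implicit.
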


\begin{proof}
Assume that the product $R \coloneqq \prod_{i\in I} R_i$ is in $\mathrm{rep}^{\mathrm{pwf}}(C)$; the other case is analogous. Since limits in functor categories are computed pointwise and $R \in \mathrm{rep}^{\mathrm{pwf}}(C)$, the vector space $R(x) \cong \prod_{i\in I} R_i(x)$ has finite dimension for every object $x \in C$. This forces $R_i(x)$ to be not $0$ for only finitely many indices $i$. Since direct products and direct sums of vector spaces coincide for finite families, we have:
\[ \prod_{i\in I} R_i(x) = \bigoplus_{i\in I} R_i(x). \]
Since both products and coproducts are computed pointwise, it immediately follows that $\prod_{i\in I} R_i \cong \bigoplus_{i\in I} R_i$ in $\mathrm{rep}^{\mathrm{pwf}}(C)$.
\end{proof}
\subsection{Proof of theorem \ref{hered}}
Let $X,Y \in \operatorname{rep}^{\mathrm{pwf}}(C)$, and let
\[
X \cong \bigoplus_{i\in I} X_i,
\qquad
Y \cong \bigoplus_{j\in J} Y_j
\]
be their Krull--Schmidt decompositions into indecomposable objects. The existence of such decomposition is guaranteed by Theorem \ref{KS}. We obtain the desired result via the following chain of isomorphisms:
\begin{align*}
\Ext^2_{\mathrm{rep}^{\mathrm{pwf}}(C)}(X,Y) &= \Ext^2_{\mathrm{rep}^{\mathrm{pwf}}(C)}\Big(\bigoplus_{i\in I}X_i, \bigoplus_{j\in J}Y_j\Big) \\
&\overset{(1)}{=} \Ext^2_{\mathrm{rep}^{\mathrm{pwf}}(C)}\Big(\bigoplus_{i\in I}X_i, \prod_{j\in J}Y_j\Big) \\
&\overset{(2)}{=} \prod_{i\in I}\prod_{j\in J} \Ext^2_{\mathrm{rep}^{\mathrm{pwf}}(C)}(X_i, Y_j) \\
&\overset{(3)}{=} \prod_{i\in I}\prod_{j\in J} \Ext^2_{\mathrm{rep}^{\mathrm{qnf}}(C)}(X_i, Y_j)\\
&\overset{(4)}{=} \prod_{i\in I}\prod_{j\in J} 0 \;=\; 0.
\end{align*}

Let us justify the numbered equalities:
\begin{enumerate}
    \item in $\mathrm{rep}^{\mathrm{pwf}}(C)$, the direct sum is equal to the direct product by Lemma \ref{sumprod};
    \item this equality is given by Lemma \ref{lemma ext};
     \item this equality follows from Theorem \ref{comb prop}, since the indecomposable summands $X_i$ and $Y_j$ belong to $\mathrm{rep}^{\mathrm{qnf}}(C)$;
      \item this equality is given by the hereditariness of ${\text{rep}^\text{qnf}}(C)$ (Corollary \ref{qnf hered}).
\end{enumerate}
Finally, since $\Ext^2_{\mathrm{rep}^{\mathrm{pwf}}(C)}(-,-) = 0$, the vanishing of the higher extension groups for all $n > 2$ follows immediately from Proposition \ref{prop:ext_vanish_general}. \qed

\end{document}